\newtheorem{theorem}{Theorem}[section]
\newtheorem{lemma}[theorem]{Lemma} 
\newtheorem{propose}[theorem]{Proposition}
\newtheorem{example}[theorem]{Example} 
\newtheorem{remark}[theorem]{Remark} 
\numberwithin{equation}{section}
\newcommand{\refpart}[1]{{\it (#1)}}  
\renewcommand\ge\geqslant
\renewcommand\geq\geqslant
\renewcommand\le\leqslant
\renewcommand\leq\leqslant
\newcommand{\CC}{\mathbb{C}}
\newcommand{\PP}{\mathbb{P}}
\newcommand{\RR}{\mathbb{R}}
\newcommand{\QQ}{\mathbb{Q}}
\newcommand{\ZZ}{\mathbb{Z}}
\newcommand{\EE}{{\mathcal E}}
\newcommand{\SU}{{\mathcal S}}
\newcommand{\frc}[2]{{\textstyle \frac{#1}{#2}\, }}
\newcommand{\hpg}[5]{{}_{#1}\mbox{\rm F}_{\!#2}\!
	\left(\left.{#3 \atop #4}\right| #5 \right) }
\newcommand{\hpgo}[2]{{}_{#1}\mbox{\rm F}_{\!#2}}
\title{\bf Belyi maps from zeroes of hypergeometric polynomials}
\author{Raimundas Vidunas\\
\em Institute of Applied Mathematics, Vilnius University}
\date{\empty}
\begin{document}

\maketitle


\begin{abstract}
Evaluation of low degree hypergeometric polynomials 
to zero defines an algebraic hypersurface in the affine space of the free parameters and the argument. 
This article investigates the algebraic surfaces $\hpgo21(-N,b;c;z)=0$ for $N=3$ and $N=4$.
As a captivating 
application, these surfaces parametrize certain families of genus 0 Belyi maps.
\end{abstract}

\section{Introduction}
\label{sec:intro}

Systematic cataloging of Belyi maps is an active research undertaking \cite{BelyiDB,BelyiAS22,vHV15}. 
Contributing to this venture, 
this article considers comprehensively Belyi maps $\varphi:\PP^1\to\PP^1$ of the form
\begin{equation} \label{eq:g11hm}
\varphi(x)=(1-x)^p(1-\lambda x)^q\,G_m(x)^r
\end{equation}
or 
\begin{equation} \label{eq:g2hm}
\varphi(x)=(1+\alpha x+\beta x^2)^p\,G_m(x)^r,
\end{equation}
where 
$G_m(x)$ is a polynomial of degree $m$ without multiple roots, and 
\begin{equation} \label{eq:bmpws}
\varphi(x)=1+O(x^{m+2}).
\end{equation}
This implies $G_m(0)=1$. 
The powers $p,q,r$ are allowed to be positive or negative integers.
If the powers 
are positive integers, then the considered Belyi maps 
are called {\em Shabat polynomials} \cite[\S 2.2]{LandoZvonkin} or {\em generalized Chebyshev polynomials} \cite{Adrd4}.
The powers $p,q,r$ are assumed to be different.

Assuming that the point $x=\infty$ is above $\varphi\in \{0,\infty\}$,
the distinct points in the three canonical fibers $\varphi\in\{0,1,\infty\}$ are:
\begin{itemize}
\item two roots of $(1-x)(1-\lambda x)$ or $1+\alpha x+\beta x^2$, of branching order $|p|$ or $|q|$,
above $\varphi=0$ or $\varphi=\infty$ depending on the signs of $p,q$;
\item the roots of $G_m(x)$, of branching order $|r|$, above $\varphi\in \{0,\infty\}$;
\item the point $x=\infty$, of branching order $|p+q+mr|\neq0$ or $|2p+mr|\neq 0$ respectively for the two forms, 
above $\varphi\in \{0,\infty\}$;
\item the point $x=0$, of branching order $m+2$, above $\varphi=1$;
\item 
$d-(m+2)$ non-branching points above $\varphi=1$,
where $d$ is the degree of $\varphi$.
\end{itemize}
In total we count 
\begin{equation} \label{eq:tp}
2+m+1+1+(d-m-2) = d+2
\end{equation}
distinct points in the 3 fibers, which is the minimal number of points in 3 fibers
for a covering $\PP^1\to\PP^1$ by the Riemann-Hurwitz formula \cite[Theorem 5.9]{Silverman}, 
and the number of points that Belyi maps of genus 0 must have in the 3 fibers.
This will be recapped in Lemma \ref{th:np}. 

As it turns out, the considered families of Belyi maps are parametrized 
by the zero sets of hypergeometric polynomial equations,
namely
\begin{equation} \label{eq:g11hm21}
\hpg21{-m-1,\,\frac{q}{r}}{-m-\frac{p}{r}}{\,\lambda}=0
\end{equation}
for the Belyi maps of the form (\ref{eq:g11hm}), and
\begin{equation}  \label{eq:g2hm21}
\hpg21{-\frac{m+1}2,-\frac{m}2}{-m-\frac{p}{r}}{\frac{4\beta}{\alpha^2}}=0
\end{equation}
for the form (\ref{eq:g2hm}). 
These hypergeometric equations were derived earlier by Adrianov \cite[Propositions 3.5, 3.8]{Adrd4} 
in the context of Shabat polynomials.
The equations give the generic number of distinct Belyi maps (up to M\"obius transformations)
of the considered branching patterns, namely $m+1$ for fixed $p,r,q,m$ in the form (\ref{eq:g11hm}),
or $\lceil m/2 \rceil$ for fixed $p,q,m$ in the form (\ref{eq:g2hm}). As we show in Lemmas \ref{th:ck11} and \ref{th:ck2}, 
this number is smaller exactly when one of the quotients 
$p/r$, $q/r$ is a smaller positive integer.
It appears that the hypergeometric polynomials in (\ref{eq:g11hm21}), (\ref{eq:g2hm21}) are irreducible over $\QQ$ typically,
giving the maximal Galois orbits of Belyi maps with the considered branching patterns.

The considered Belyi maps constitute a borderline easy case
that continues extensively the examples  in \cite[\S 2.2]{LandoZvonkin} and \cite{APZ}.
The new examples and methodology will not be radically novel for active readers of \cite{APZ}, \cite{LandoZvonkin}.
In particular, we encounter new families of Belyi maps defined over $\QQ$ that are parametrized by:
\begin{itemize}
\item rational points on an elliptic curve, as in \cite[\S 2.2.4.3]{LandoZvonkin};
see Examples \ref{eq:pqrr113}, \ref{ex:ec5} and \ref{ex:ec6} here.
\item integer solutions of Pell's equation, as in \cite[Ch.~10]{APZ}; 
see Example \ref{ex:pell}.
\item rational points on a cubic surface or a cubic pencil; see Examples \ref{ex:pq2}, \ref{ex:surf4}.
\item integral points on elliptic curves or elliptic surfaces; see Example \ref{ex:degd4}.
\item conic curves defined over $\QQ$ that may have no rational points; see Example \ref{ex:conic}.
\end{itemize}

The investigation of this article starts with recalling the relevant properties of hypergeometric 
functions. Sections \ref{sec:hpg3} and \ref{sec:hpg4} considers the algebraic surfaces defined
\begin{equation}
\hpg21{-N,\,b\,}{c}{z}=0,
\end{equation}
with $N=3$ and $N=4$.  We modify the lower parameter $-c+1-N$, 
so that after clearing the denominators we have the symmetric identity
\begin{equation} \label{eq:hpgsym1}
(c)_m\;\hpg21{-N,\,b}{-c+1-N}{z}=(b)_m\,z^N\;\hpg21{-N,\,c}{-b+1-N}{\frac1z\,}
\end{equation}
of the same hypergeometric summation in the opposite directions. 
The parameters $b,c$ are thereby interchangeable in combination with the M\"obius transformation $z\mapsto 1/z$.
These equations with $N\in\ZZ$ define algebraic surfaces in the affine space 
defined by the coordinates $b,c,z$. 
Further symmetries of these hypergeometric polynomials are presented in (\ref{eq:hpgsym2})--(\ref{eq:hpgsym6}).

The technical contribution of this article is careful consideration of the hypergeometric polynomial equations,
including their degenerations, and analysis of their algebraic structure and elliptic fibrations.

The article continues immediately with a primer on hypergeometric 
functions and their properties that will used. 
In the next section we start with several easier examples of Belyi maps that are largely covered in \cite[\S 2.2]{LandoZvonkin}.
Thereby we gradually build up the basic methodology, illustrate arising technical details,
and set the context of where this article goes beyond the established understanding.

\section{Hypergeometric details}
\label{sec:append}

The Gauss hypergeometric function \cite[Ch.~2]{AAR}  
is defined by the series
\begin{align} \label{eq:hpg}
\hpg{2}{1}{a,\,b\,}{c}{\,z} = \sum_{k=0}^{\infty}\frac{(a)_k\,(b)_k}{(c)_k\,k!}\,z^k.
\end{align}
Here $(\alpha)_k = \alpha\,(\alpha+1)(\alpha+2)\ldots(\alpha+k-1)$
denotes the {\em Pochhammer symbol},  or the ``raising" variant of the factorial. 
Using the $\Gamma$-function \cite[Ch.~1]{AAR} 
we can write
\begin{align}
(\alpha)_k = \, \frac{\Gamma(\alpha+k)}{\Gamma(\alpha)}. 
\end{align}
The standard analytic continuation of the $\hpgo21$-function is onto $\CC\setminus[1,\infty)$. 
This  
function is undefined when $c$ equals zero or a negative integer. 

\subsection{Hypergeometric polynomials}

If well-defined, the hypergeometric series (\ref{eq:hpg}) is a polynomial 
in $z$ when $a$ or $b$ are non-positive integers.  
In addition to (\ref{eq:hpgsym1}), we have these symmetries of hypergeometric polynomials:
\begin{align}  \label{eq:hpgsym2}
(c)_N\;\hpg21{-N,\,b}{-c+1-N}{z} 
& = (-1)^N(b+c)_N\;\hpg21{-N,\,b\,}{b+c}{1-z} \\
& = (b+c)_N\,(-z)^N\;\hpg21{-N,\,c\,}{b+c}{1-\frac1z\,} \\
& = (c)_N\,(1-z)^N\;\hpg21{-N,\,e}{-c+1-N}{\frac{z}{z-1}}  \\
 \label{eq:hpgsym6}
& = (b)_N\,(z-1)^N\;\hpg21{-N,\,e}{-b+1-N}{\frac1{1-z}}\!.
\end{align}
where $e=-b-c+1-N$.
The following expression defines a polynomial of degree $\lfloor N/2 \rfloor$ 
for any integer $N\ge 0$:
\begin{equation}
\hpg21{-\frac{N}2,-\frac{N-1}2}{c}{z}
\end{equation}

If $c$ is a non-negative integer, 
but $a$ or $b$ is a non-negative integer $\ge c$, 
the hypergeometric summation (\ref{eq:hpg}) 
can be considered as a polynomial, of degree $-a$ or $-b$.
This interpretation is often used 
in theory of orthogonal polynomials \cite{KS10}. 
We adopt this interpretation of hypergeometric functions like 
\begin{equation} \label{eq:degenpo}
\hpg21{-n,\,b}{-n-m}{z}
\end{equation}
with integer $n,m\ge 0$.
The well-known identities 
\begin{align} \label{frlin2f1}
\hpg{2}{1}{a,\,b\,}{c}{\,z} =&\, (1-z)^{-a}\;\hpg{2}{1}{a,\,c-b\,}{c}{\,\frac{z}{z-1}} \\
\label{eq:eulr}
= &\, (1-z)^{c-a-b}\;\hpg{2}{1}{c-a,\,c-b\,}{c}{\,z}
\end{align}
of Pfaff and Euler \cite[Th. 2.2.5]{AAR}  
should not be automatically applied then; 
see \cite[Lemma 3.1]{degen}.
Yet these identities are valid when all three numbers $a,b,c$ are non-negative integers,
and $-c\ge \max(-a,-b)$; see \cite[\S 9]{degen}.

\subsection{Distinctive roots of hypergeometric polynomials}

Distinctiveness of roots of hypergeometric polynomials is easily proved by 
using {\em contiguous relations}  \cite[\S 2.5, \S3.7]{AAR} of $\hpgo21$-functions.
In particular, we use the observations in the following two lemmas.
\begin{lemma}  \label{th:ck11}
Consider the sequence of hypergeometric polynomials
\begin{equation} \label{eq:defpk}
P(k)=\hpg21{-k,\,b}{1-k-c}{z},
\end{equation}
with some $b,c\in\CC$. 
Suppose that for a positive integer $k$ we have $(b)_k\neq 0$, $(c)_{k}\neq 0$,  and $(b+c)_k\neq 0$.
Then:
\begin{enumerate}
\item The polynomials $P(k)$ and $P(k-1)$ do not have common roots.
\item The polynomial $P(k)$ has $k$ distinct roots. 
\item $z=1$ is not a root of $P(k)$.
\end{enumerate}
\end{lemma}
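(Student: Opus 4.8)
The plan is to prove the three assertions in the order \refpart{iii}, \refpart{ii}, \refpart{i}, since the later parts invoke the earlier ones. First I record two facts used throughout. The leading coefficient of $P(k)$ equals $(-k)_k(b)_k\big/\!\big((1-k-c)_k\,k!\big)=(b)_k/(c)_k$ (using $(-k)_k=(-1)^k k!$ and $(1-k-c)_k=(-1)^k(c)_k$), which is nonzero by hypothesis; hence $P(k)$ has degree exactly $k$. Also $P(k)(0)=1$, so $z=0$ is never a root. Finally, all the Pochhammer hypotheses stated for the index $k$ hold a fortiori for $k-1$, since $(b)_{k-1}$, $(c)_{k-1}$, $(b+c)_{k-1}$ are initial factors of $(b)_k$, $(c)_k$, $(b+c)_k$.

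For \refpart{iii}, evaluate at $z=1$ by the terminating Chu--Vandermonde (Gauss) summation; this is legitimate since $(1-k-c)_j\neq0$ for $0\le j\le k$, a consequence of $(1-k-c)_k=(-1)^k(c)_k\neq0$. One obtains $P(k)(1)=(1-k-c-b)_k/(1-k-c)_k=(b+c)_k/(c)_k$, nonzero by hypothesis. For \refpart{ii}, $y=P(k)$ solves the hypergeometric equation $z(1-z)y''+\big[(1-k-c)-(b-k+1)z\big]y'+kb\,y=0$. If $z_0$ were a multiple root, then $y(z_0)=y'(z_0)=0$, and the equation forces $z_0(1-z_0)y''(z_0)=0$; since $z_0=0$ is excluded by $P(k)(0)=1$ and $z_0=1$ by \refpart{iii}, we get $y''(z_0)=0$, and differentiating the equation repeatedly shows by induction that $y^{(j)}(z_0)=0$ for all $j$, so the polynomial $y$ vanishes identically — a contradiction. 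Thus $P(k)$ has no multiple root and, having degree $k$, has $k$ distinct roots; the same holds for $P(k-1)$.

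The delicate part is \refpart{i}: the obstacle is that the lower parameter $1-k-c$ of $P(k)$ moves with $k$, so the standard contiguous relations do not directly connect $P(k)$ and $P(k-1)$. I would remove this using symmetry~(\ref{eq:hpgsym2}): $(c)_k\,P(k)(z)=(-1)^k(b+c)_k\,Q(k)(1-z)$, where $Q(k)(w):=\hpg21{-k,\,b}{b+c}{w}$ has a lower parameter independent of $k$; since $(c)_k$ and $(b+c)_k$ are nonzero, $P(k)$ and $w\mapsto Q(k)(1-w)$ have the same roots, and likewise for $k-1$. A contiguous relation for the $Q$'s — which follows from the standard Gauss relations \cite[\S 2.5]{AAR} together with $z\,\frac{d}{dz}\hpg21{a,b}{c}{z}=a\big(\hpg21{a+1,b}{c}{z}-\hpg21{a,b}{c}{z}\big)$ — reads
\begin{equation*}
w(1-w)\,\frac{d}{dw}Q(k-1)(w)=(b+c+k-1)\,Q(k)(w)+(bw+1-k-b-c)\,Q(k-1)(w).
\end{equation*}
Now if $z_0$ were a common root of $P(k)$ and $P(k-1)$, then $w_0:=1-z_0$ is a common root of $Q(k)$ and $Q(k-1)$, so the displayed identity gives $w_0(1-w_0)\,Q(k-1)'(w_0)=0$. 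But $w_0\neq0$ because $z_0\neq1$ by \refpart{iii}, and $w_0\neq1$ because $z_0\neq0$ since $P(k)(0)=1$; hence $Q(k-1)'(w_0)=0$, so $w_0$ is a multiple root of $Q(k-1)$, hence of $P(k-1)$, contradicting \refpart{ii} for $k-1$. Beyond pinning down the precise contiguous relation above (a routine coefficient check), everything reduces to the standard mechanism ``a shared root would be a forbidden multiple root''.
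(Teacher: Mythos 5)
Your proof is correct, but it inverts the paper's logical architecture rather than reproducing it. The paper proves \refpart{i} first, via the three-term recurrence (\ref{eq:recpk}) in $k$: a hypothetical common root of $P(k)$ and $P(k-1)$ is pushed down to $P(k-2),\dots,P(0)=1$, using $(b+c)_k\neq0$ to keep the leading coefficient of the recurrence nonzero. It then deduces \refpart{ii} from \refpart{i} through the contiguous relation $\frac{1-z}{k}\,\frac{dP(k)}{dz}+P(k)=\frac{k-1+b+c}{k-1+c}\,P(k-1)$, which turns a multiple root of $P(k)$ into a common root with $P(k-1)$; part \refpart{iii} is Chu--Vandermonde, exactly as in your argument. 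You instead establish the squarefreeness \refpart{ii} independently, from the hypergeometric differential equation (the classical ``zeros of a nontrivial solution of a second-order linear ODE are simple away from the singular points $0,1,\infty$'' mechanism), and then derive \refpart{i} from \refpart{ii}: a common root would force a multiple root of $P(k-1)$. Your passage through the symmetry (\ref{eq:hpgsym2}) to the polynomials $Q(k)$ with $k$-independent lower parameter is a clean way to dodge the shifting parameter $1-k-c$, though not strictly necessary (the paper's relations handle the shift directly); I verified your contiguous relation for the $Q(k)$ and it checks out, including the boundary coefficient at $w^k$. What the paper's route buys is an elementary induction terminating in the concrete base case $P(0)=1$, with no appeal to the ODE; what your route buys is that \refpart{ii} stands on its own (no dependence on \refpart{i}) and the ODE argument transfers verbatim to other terminating hypergeometric families such as the one in Lemma \ref{th:ck2}. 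Both orderings are non-circular and complete.
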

\begin{proof}
The sequence $P(k)$ satisfies the recurrence relation
\begin{equation} \label{eq:recpk}
\frac{k(k-1+b+c)z}{k-1+c}\,P(k-1) = (kz+k+bz+c)\,P(k) - (k+c)\,P(k+1).\ 
\end{equation}
This follows from the contiguous relations \cite{contgr03}; 
or by applying Zeilberger's algorithm  \cite[\S 3.11]{AAR}; or by checking the series expansion.
We apply the recurrence with decreasing $k$, and use the assumption that $(b+c)_{k}\neq 0$.
The recurrence implies that a common root of $P(k)$ and $P(k-1)$ is either $z=0$, 
which conflicts with the hypergeometric values $1$ at $z=0$,
or it would be a common root of $P(k-2),\ldots$, and of $P(0)=1$ contradictorily. 
 
The second statement follows from the degree of $k$ of $P(k)$ thanks to \mbox{$(b)_{k}\neq 0$,}
and the fact that another contiguous relation
\begin{align}
\frac{1-z}{k}\,\frac{dP(k)}{dz}+P(k) = \frac{k-1+b+c}{k-1+c}\,P(k-1)
\end{align}
implies that a multiple root of $P(k)$ would be also a root of $P(k-1)$, leading to the established first statement.

The last statement follows from the Chu-Vandermonde identity \cite[p.~67]{AAR}
\begin{equation} \label{eq:chvdm}
\hpg21{-k,B}{C}{1}=\frac{(C-B)_k}{(C)_k},
\end{equation}
giving the value $(b+c)_k/(c)_k$ for $P(k)$ evaluated at $z=1$.
\end{proof}

\begin{lemma}  \label{th:ck2}
Consider the sequence of hypergeometric polynomials
\begin{equation} 
P(k)=\hpg21{-\frac{k}2,\,-\frac{k-1}2}{1-k-c}{z},
\end{equation}
with some $c\in\CC$. 
Suppose that for a positive integer $k$ we have $(c)_{k}\neq 0$ and $(2c)_k\neq 0$.
Then:
\begin{enumerate}
\item The polynomials $P(k)$ and $P(k-1)$ do not have common roots.
\item The polynomial $P(k)$ has $\lfloor k/2 \rfloor$ distinct roots.
\item $z=1$ is not a root of $P(k)$.
\end{enumerate}
\end{lemma}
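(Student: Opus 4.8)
The plan is to mirror the proof of Lemma \ref{th:ck11} as closely as possible, since the polynomials here are the ``quadratic'' specialization $b\mapsto -k/2$, $-(k-1)/2$ split across two parameters, and the same contiguous-relation machinery should apply. First I would locate (or derive) the three-term recurrence linking $P(k+1)$, $P(k)$, and $P(k-1)$ for the sequence $P(k)=\hpg21{-k/2,\,-(k-1)/2}{1-k-c}{z}$. The cleanest route is probably to regard $P(k)$ as a Jacobi-type polynomial after the Euler/Pfaff-free normalization discussed in the primer, or to invoke Zeilberger's algorithm on the summand $(-k/2)_j(-(k-1)/2)_j/((1-k-c)_j\,j!)\,z^j$; either way one gets a recurrence whose coefficients are rational in $k,c,z$ with a leading factor that vanishes only when some shifted Pochhammer symbol hits zero. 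The hypotheses $(c)_k\neq0$ and $(2c)_k\neq0$ are exactly what is needed to ensure the ``propagation'' coefficient of $P(k-1)$ (and the coefficient of $P(k+1)$) are nonzero, so the recurrence can be run downward. As in Lemma \ref{th:ck11}, a common root of $P(k)$ and $P(k-1)$ would then force a common root of all earlier $P(j)$ down to $P(0)=1$, a contradiction, except possibly for the spurious value $z=0$, which is excluded because every $P(j)$ takes value $1$ at $z=0$. That proves statement (i).

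For statement (ii), the degree of $P(k)$ is $\lfloor k/2\rfloor$ — this is precisely the assertion recalled in the primer that $\hpg21{-N/2,-(N-1)/2}{c}{z}$ has degree $\lfloor N/2\rfloor$, valid here since the lower parameter $1-k-c$ is nonzero by $(c)_k\neq0$ (note $1-k-c=-(c)_k/(c)_{k-1}$ up to sign, or more simply $1-k-c=0$ would make $c=1-k$, contradicting $(c)_k\neq 0$). To upgrade ``degree $\lfloor k/2\rfloor$'' to ``$\lfloor k/2\rfloor$ distinct roots'' I would reproduce the derivative argument: find the contiguous relation expressing a combination of $P(k)$ and $(1-z)\,dP(k)/dz$ as a multiple of $P(k-1)$, with the multiplier nonzero under our hypotheses; then a multiple root of $P(k)$ is a root of $P(k-1)$ as well, contradicting statement (i). The main obstacle is producing the correct ``differentiation'' contiguous relation in this split-parameter setting — the analogue of the displayed identity $\frac{1-z}{k}P'(k)+P(k)=\frac{k-1+b+c}{k-1+c}P(k-1)$ — because shifting $k$ by one simultaneously shifts both upper parameters by $1/2$ and the lower by $1$, so one must assemble it from the standard $\hpgo21$ contiguous relations (or verify it directly on the series) rather than quote a single textbook identity. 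I expect the factor $1-z$ to survive (reflecting that the only ``extra'' place a root could collide is $z=1$), which dovetails with statement (iii).

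Finally, statement (iii) follows by evaluating $P(k)$ at $z=1$ via the Chu--Vandermonde identity \eqref{eq:chvdm}, just as in Lemma \ref{th:ck11}. Writing $P(k)=\hpg21{-k/2,-(k-1)/2}{1-k-c}{z}$, one of the two cases applies: if $k$ is even, take $-k/2$ as the terminating parameter and get value $\frac{(1-k-c+(k-1)/2)_{k/2}}{(1-k-c)_{k/2}}$; if $k$ is odd, use $-(k-1)/2$. Either way the numerator is a product of shifted values that, after simplification, is a ratio of the form $\pm(2c)_k$-type and $(c)_k$-type Pochhammer products, which is nonzero exactly under the stated hypotheses $(c)_k\neq0$ and $(2c)_k\neq0$. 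So $z=1$ is not a root. The only genuinely new work compared to Lemma \ref{th:ck11} is the bookkeeping in the $z=1$ evaluation and the derivation of the recurrence and the differentiation identity for the split-parameter family; everything else is a direct transcription of the previous argument.
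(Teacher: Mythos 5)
Your proposal follows the paper's proof essentially verbatim in structure: a three-term recurrence (the paper's is $P(k)-P(k+1)=\frac{kz(2c+k-1)}{4(c+k)(c+k-1)}P(k-1)$) run downward to $P(0)=1$ for statement (i), the derivative contiguous relation $\frac{2(1-z)}{k}P'(k)+P(k)=\frac{2c+k-1}{2(k+c-1)}P(k-1)$ combined with the degree count for statement (ii), and the Chu--Vandermonde evaluation at $z=1$ (which indeed simplifies to $(c+\frac12)_{\lfloor k/2\rfloor}/(c+\lceil k/2\rceil)_{\lfloor k/2\rfloor}$, nonzero by the duplication-formula factorization of $(2c)_k$) for statement (iii). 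The identities you defer to Zeilberger/contiguous-relation bookkeeping are exactly the ones the paper writes down and likewise justifies only by citation or series check, so the argument is correct and the same as the paper's.
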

\begin{proof}
The sequence $P(k)$ satisfies the recurrence relation
\begin{equation}
P(k)-P(k+1) = \frac{kz\,(2c+k-1)}{4(c+k)(c+k-1)} \, P(k-1).
\end{equation}
Under the assumption that $(b+c)_{k}\neq 0$, the recurrence shows that 
a common root of $P(k)$ and $P(k+1)$ is either the rejectable $z=0$, 
or it would be a common root of $P(k-1)$, $P(k-2),\ldots$, and of $P(0)=1$ contradictorily. 
 
The second statement follows from the degree of $k$ of $c_k$, thanks to \mbox{$(a)_{k}\neq 0$,}
and the fact that another contiguous relation
\begin{align}
\frac{2(1-z)}{k}\frac{dP(k)}{dz} + P(k) = \frac{2c+k-1}{2(k+c-1)}\,P(k-1)
\end{align}
implies that a multiple root of $P(k)$ would be also a root of $P(k+1)$, 
leading to the established first statement.

The last statement follows from applying the Chu-Vandermonde identity  (\ref{eq:chvdm}). 
The evaluation at $z=1$ is
\begin{equation}
\left. \left( {\textstyle c+\frac12}\,\right)_{\lfloor \frac{k}2 \rfloor} \,
\right/  \left(c+\left\lceil {\textstyle \frac{k}2 } \right\rceil\right)_{\lfloor \frac{k}2 \rfloor}
\end{equation}
for both even and odd $k$.
\end{proof}


\section{Cubic hypergeometric polynomials}
\label{sec:hpg3}

After clearing the denominators in the equation $\;\hpg21{-3,\;b}{-c-2}{z}=0\;$ we obtain
\begin{equation} \label{eq:hpg3}
b(b+1)(b+2)z^3+3bc(b+1)z^2+3bc(c+1)z+c(c+1)(c+2)=0,
\end{equation}
or rather more compactly,
\begin{equation}  \label{eq:hpg3a}
(bz+c)^3+3(bz+c)(bz^2+c)+2(bz^3+c)=0.
\end{equation}
Let $\SU_3$ denote the algebraic surface defined by (\ref{eq:hpg3}), of degree 6.

Fixing $z$ leads to a cubic equation in $b,c$. 
For $z\in\{0,1\}$, the equation factorizes into 3 linear factors.  
For $z\in\{-1,2,\frac12\}$, it factorizes into a quadratic and a linear parts. 
For other fixed $z$, we get an irreducible cubic curve of genus $0$.
Here is a parametrization of those cubic curves by $e=bz+c$:
\begin{equation}  \label{eq:bcst}
b=\frac{e\,(e+1)\,(e+2)}{z\,(1-z)\,(3e+2z+2)}, \qquad c=\frac{e\,(e+z)\,(e+2z)}{(z-1)\,(3e+2z+2)}.
\end{equation}
This also gives a birational parametrisation of 
$\SU_3$ by $z$ and $e$. If \mbox{$3e+2z+2=0$} in the denominators,
then the polynomial in (\ref{eq:hpg3}) can be factored as 
\begin{equation} \label{eq:efa} \textstyle
-\frac{4}{27}(z+1)(z-2)(2z-1).
\end{equation}
Both numerators in (\ref{eq:bcst}) equal 0 as well at the exceptional 3 points $(e,z)\in\{(0,-1),(-2,2),(-1,\frac12)\}$.
The 3 points are blown up to these lines on $\SU_3$:
\begin{equation} \textstyle
b=c,z=-1;  \qquad 2b+c+2=0,z=2; \qquad b+2c+2=0,z=\frac12.
\end{equation}

We will be interested in the points on $\SU_3$ with both $b,c$ positive, while \mbox{$z\not\in\{0,1\}$} and $b\neq c$. 
Dependence of the signs of $b,c$ on the parameters $e,z$ by the parametrization (\ref{eq:bcst}) 
is depicted in Figure \ref{fig:sregions}\refpart{i}.
There are two regions marked by + where $b$ and $c$ are positive; they are delineated by 
\begin{equation} \label{eq:shb1}
z<0 \qquad \mbox{and} \qquad e\,(3e+2z+2)<0.
\end{equation}
The exceptional lines (\ref{eq:efa}) do not apply to this interest.

\begin{figure}
\begin{center}
\begin{picture}(400,175)
\put(-3,2){\includegraphics[width=175pt]{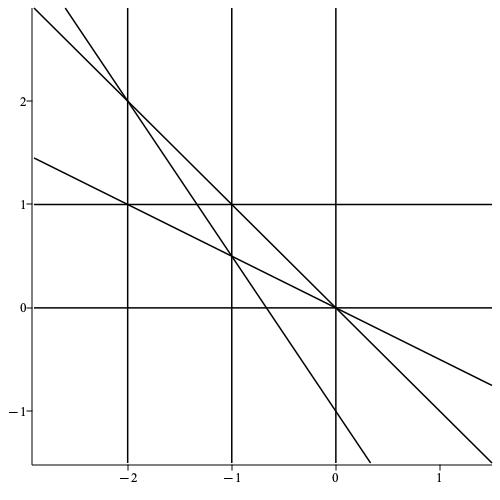}}
\put(223,2){\includegraphics[width=175pt]{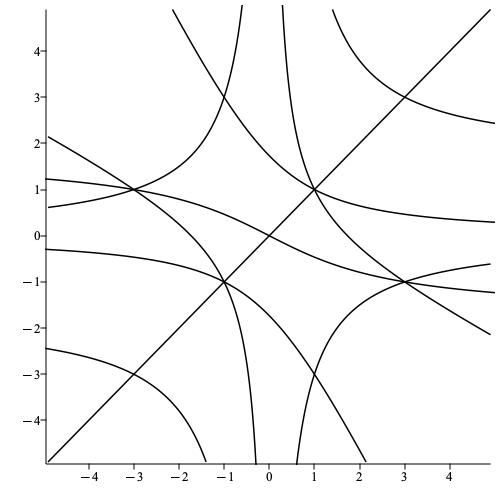}}
\put(165,6){\small $e$}  \put(0,170){\small $z$} 
\put(391,6){\small $y$}  \put(232,170){\small $t$} 
\put(23,41){\small $=$} \put(56,40){\small $\pm$} \put(90,36){\small $=$} \put(104,54){\small $+$} 
\put(115,17){\small $+$} \put(126,35){\small $=$}  \put(158,33){\small $\mp$} \put(151,58){\small $=$}  
\put(23,84){\small $\pm$} \put(54,84){\small $=$} \put(79.5,73){\tiny $\pm$} \put(90,72){\small $\mp$} 
\put(59,98){\small $\mp$} \put(72,99){\tiny $\pm$} \put(83,87){\small $=$} \put(102,91){\small $\mp$}
\put(15,108){\small $\mp$}  \put(23,131){\small $=$}  \put(46,112){\small $\pm$} \put(66,108){\tiny $\mp$}
\put(15,170){\tiny $\mp$} \put(30,165){\small $\pm$}  \put(58,145){\small $=$} \put(93,145){\small $\pm$}
 \put(139,87){\small $\pm$} \put(139,145){\small $\mp$}  %
\put(295,160){\small $=$}  \put(311,143){\small $\pm$}   \put(335,134){\small $\mp$} 
\put(361,160){\small $=$} \put(382,143){\small $\mp$} \put(361,112){\small $=$} 
\put(330,93){\small $\mp$}  \put(361,88){\small $\pm$}  \put(388,77){\small $=$}  
\put(324,70){\small $=$} \put(361,44){\small $\pm$}   \put(388,66){\small $\mp$}   
\put(317,38){\small $\pm$} \put(334,22){\small $=$} 
\put(292,47){\small $\mp$} \put(265,25){\small $=$} \put(246,37){\small $\mp$} 
\put(265,65){\small $=$}  \put(298,91){\small $\mp$}  \put(265,95){\small $\pm$}  
\put(241,107){\small $=$}  \put(241,117){\small $\mp$}   
\put(303,109){\small $=$} \put(265,137){\small $\pm$}   
\put(-2,-5){\refpart{i}} \put(230,-5){\refpart{ii}} 
\end{picture}
\end{center}
\caption{The signs of $b$ and $c$ depending: \refpart{i} on $e,z$ in formula (\ref{eq:bcst});
\refpart{ii} on $t,y$ in formula (\ref{eq:bcuv}). The upper sign in $\pm,\mp,=$ is for $b$,
while the lower one is for $c$. There sign + indicates both $b,c$ positive.}
\label{fig:sregions}
\end{figure}

\subsection{Complete factorization for some $b,c$}

Suppose that the parameters $b,c$ have the form (\ref{eq:bcst}) for some \mbox{$z=s\in\QQ$} and $e\in\QQ$.
Considering (\ref{eq:hpg3}) as a cubic polynomial in $z$, it then has the root $z=s$. 
For generic values of $e,s$, the other two roots solve the quadratic equation
\begin{align} \label{eq:hpg3z2}
\! \frac{e^2-2es-2s^2+3e+2}{e+2s-2}\,z^2-(2e+s+1)sz 
+\frac{e^2+3es+2s^2-2e-2}{e-2s+2}\,s^2 & =0.
\end{align}
We are interested when those other roots are in $\QQ$ as well. 
For that, the discriminant
\begin{equation}
\frac{3s^2(s-1)^2(e+2s+2)(3e+2s+2)}{(e+2s-2)\,(e-2s+2)}.
\end{equation}
must be a full square. We apply the birational transformation
\begin{equation}
e=2 \,\frac{\xi-1}{\eta-\xi},\qquad s=\frac{\eta+1}{\eta-\xi},
\end{equation}
and the discriminant becomes
\begin{equation}
\frac{3\eta(1-\xi-\eta)(\xi+1)^2(\eta+1)^2}{\xi\,(\eta-\xi)^4}.
\end{equation}
The values $\xi,\eta$ should therefore determine a $\QQ$-rational point on the cubic surface
\begin{equation}
3\varrho^2=\xi\eta(1-\xi-\eta).
\end{equation}
This cubic surface has three singularities $(\xi,\eta,\varrho)\in\{(0,0,0),(0,1,0),(0,0,1)\}$.
By intersecting this surface with the pencil of 
lines $\xi=t\varrho$, $\eta=y\varrho$ through the singularity $(0,0,0)$ we obtain this birational parametrization by $(t,y)\in\PP^2$:
\begin{equation}
\xi=\frac{ty-3}{y(y+t)}, \qquad \eta=\frac{ty-3}{t(y+t)}, \qquad \varrho=\frac{ty-3}{ty (y+t)}.
\end{equation}
This translates to
\begin{equation} \label{eq:stuv}
e=\frac{2t\,(y^2+3)}{(t-y)(ty-3)}, \qquad s=\frac{y\,(t^2+2ty-3)}{(y-t)(ty-3)},
\end{equation}
and ultimately to
\begin{equation} \label{eq:bcuv}
b=-\frac{(t^2+3)(y^2+3)\,(ty+3)}{3(t^2+2ty-3)(y^2+2ty-3)}, \qquad 
c=\frac{(y^2+3)(t^2y-3y-6t)}{3(y-t)(y^2+2ty-3)}.
\end{equation}
This parametrizes the cases when the cubic polynomial (\ref{eq:hpg3}) factors completely.
The special case $y=-t$ gives the following neat family:
\begin{equation}  \label{eq:bcwt} \textstyle
b =\frac13\,t^2-1 , \qquad c=-\frac16\,t^2-\frac12.
\end{equation}
The algebraic map (\ref{eq:stuv}) is 2-to-1 generically as the central symmetry $(y,t)\mapsto(-y,-t)$ keeps $e,s$ invariant
and permutes the roots of (\ref{eq:hpg3z2}). 
Up to this symmetry, the inverse map to (\ref{eq:stuv}) is
\begin{equation}
y=\sqrt{\frac{3(e+2s+2)(e-2s+2)}{(e+2s-2)(3e+2s+2)}}, \qquad
t=\sqrt{\frac{3(e+2s-2)(e-2s+2)}{(e+2s+2)(3e+2s+2)}}.
\end{equation}
There is no point $(y,t)\in\RR^2$ where both $b$ and $c$ are positive by the parametrization (\ref{eq:bcuv}); 
see Figure \ref{fig:sregions}\refpart{ii}, and note that the only points $(y,t)\in\{(3,-1),(-3,1)\}$ where both $b,c$ 
are undetermined by (\ref{eq:bcuv}) give $(e,s)=(-1,-3)$ in (\ref{eq:stuv}) and hence rejectable $(b,c)=(0,-1)$ in (\ref{eq:bcst}).

The following map moves $z=s$ to another root of (\ref{eq:hpg3}):
\begin{equation}
(y,t)\mapsto \left( \frac{y-3}{y+1} ,\, \frac{t-3}{t+1}\right). 
\end{equation}
By composition with the central symmetry (on either or both sides) one realizes all 6 permutations of the roots of (\ref{eq:hpg3})
by fractional-linear transformations of the parameters $y,t$.

\subsection{Elliptic fibration by $b$ or $c$}
\label{sec:h3efb}

For fixed $b\not\in\{0,-1,-2\}$, equation (\ref{eq:hpg3}) defines a cubic curve of genus 1.
Its $j$-invariant equals
\begin{equation} \label{eq:ec3j}
-\frac{27\,b^2\,(9b^2+32b+32)^3}{64\,(b+1)^3\,(b+2)^2}.
\end{equation}
We can consider $\SU_3$ as an elliptic surface \cite[III.11]{SilvermanA} over $\QQ(b)$.  
Here is an isomorphic Weierstrass form for the elliptic surface,
and a characterization of rational curves on it.
\begin{propose} \label{th:hypergell}
For $b\not\in\{0,-1,-2\}$, the cubic relation $(\ref{eq:hpg3})$
between $c$ and $z$ defines a curve of genus $1$, isomorphic to the elliptic curve
\begin{equation} \label{eq:gell}
\EE_3: \quad v^2=u^3 
+b^2\,(3u-16b-16)^2
\end{equation} 
by the isomorphism 
\begin{align}  \label{ce:isom}
c= -(b+2) \frac{v+3bu-16b(b+1)}{2v}, \qquad
z= \frac{v+(3b+4)u-16b(b+1)}{2v}.
\end{align}
The Mordell-Weil group for $(\ref{eq:gell})$ of rational points  over $\QQ(b)$ is isomorphic to $(\ZZ/3\ZZ)\times \ZZ$.
It is generated the torsion point $\big(0,16b(b+1)\big)$ and the point $\big(8b,8b(b+2)\big)$. 
\end{propose}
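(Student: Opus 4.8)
The argument has two essentially independent parts: establishing the displayed isomorphism (which simultaneously shows the cubic has genus~$1$), and computing the Mordell--Weil group. Expanding the cube in $(\ref{eq:gell})$ puts $\EE_3$ in Weierstrass form $v^2=u^3+9b^2u^2-96b^2(b+1)u+256b^2(b+1)^2$; here the quantity $b_8=4a_2a_6-a_4^2$ vanishes identically, so the discriminant collapses to $\Delta(\EE_3)=-8b_4^3-27b_6^2+9b_2b_4b_6=-2^{18}\cdot27\cdot b^4(b+1)^3(b+2)^2$, nonzero precisely when $b\notin\{0,-1,-2\}$. To prove $(\ref{ce:isom})$ is an isomorphism I would substitute those expressions for $c$ and $z$ into $(\ref{eq:hpg3})$ and check that the result vanishes modulo $v^2=u^3+b^2(3u-16b-16)^2$ (a single polynomial identity), and then exhibit the inverse rational map: eliminating between the two equations of $(\ref{ce:isom})$ gives $z+c/(b+2)=2u/v$ together with $v=32b(b+1)(b+2)/D$ and $u=16b(b+1)\big((b+2)z+c\big)/D$, where $D=(b+2)(3bz+2)+(3b+4)c$. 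A birational map of smooth projective curves is an isomorphism, and since $\EE_3$ is smooth of genus~$1$ for $b\notin\{0,-1,-2\}$, so is the cubic $(\ref{eq:hpg3})$ relating $c$ and $z$.

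For the Mordell--Weil group I would view $\SU_3$ as an elliptic surface over the $b$-line with Weierstrass model $\EE_3$. It is a rational elliptic surface ($\deg a_2\le2$, $\deg a_4\le4$, $\deg a_6\le6$, equivalently $\deg\Delta=12$ once the fibre over $b=\infty$ is counted; and $\SU_3$ is rational by the parametrization $(\ref{eq:bcst})$), so $\rho(\SU_3)=10$. From $\Delta(\EE_3)$ and the local invariants $c_4,c_6$ one reads off the reducible fibres: type $I_3$ at $b=-1$ and type $I_2$ at $b=-2$ (multiplicative, since the $j$-invariant $(\ref{eq:ec3j})$ has poles there), type $I_3$ at $b=\infty$ (multiplicative after $b\mapsto1/b$, with $\mathrm{ord}\,\Delta=3$), and type $IV$ at $b=0$ (additive, $\mathrm{ord}\,\Delta=4$, $j=0$). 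Thus $\sum_v(m_v-1)=2+1+2+2=7$, and the Shioda--Tate formula $\rho=2+\sum_v(m_v-1)+\mathrm{rank}\,\mathrm{MW}$ forces $\mathrm{rank}\,\mathrm{MW}(\EE_3/\overline\QQ(b))=1$, hence $\mathrm{rank}\,\mathrm{MW}(\EE_3/\QQ(b))\le1$.

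The point $T=\big(0,16b(b+1)\big)$ lies on $\EE_3$, and its tangent line $v=16b(b+1)-3bu$ meets $\EE_3$ with residual equation $u^3=0$, so $T$ is an inflection point and $3T=O$; thus $\ZZ/3\ZZ\subseteq\mathrm{MW}_{\mathrm{tors}}$. Specializing $b$ to a value of good reduction --- say $b=2$, whose reductions modulo $5$ and $7$ give point groups of orders $9$ and $12$ --- bounds the rational torsion of the specialized curve by $\gcd(9,12)=3$; since specialization is injective on torsion at places of good reduction, $\mathrm{MW}_{\mathrm{tors}}=\ZZ/3\ZZ=\langle T\rangle$. The point $P=\big(8b,8b(b+2)\big)$ also lies on $\EE_3$; it is not torsion, since $\mathrm{MW}_{\mathrm{tors}}=\{O,\pm T\}$ while $P\ne O$ and $P\ne\pm T$ ($u$-coordinate $8b\ne0$). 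Hence $\mathrm{rank}\,\mathrm{MW}(\EE_3/\QQ(b))=1$ and $P=nP_0+T'$ with $P_0$ generating the free part, $n\ge1$, $T'\in\mathrm{MW}_{\mathrm{tors}}$. Specializing $b$ shows $P$ reduces to the singular point of the Weierstrass fibre --- hence to a non-identity component --- at $b=0,-2,\infty$, and to a smooth point at $b=-1$; moreover $P\cdot O=0$, the section $\big(8b,8b(b+2)\big)$ never meeting the zero section (also in the chart at $b=\infty$). The height formula then gives $h(P)=2\chi(\OO_{\SU_3})+2(P\cdot O)-\sum_v\mathrm{contr}_v(P)=2-(2/3+1/2+2/3)=1/6$. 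Since every height on $\SU_3$ lies in $\frac16\ZZ$ (the local contributions have denominators dividing $\mathrm{lcm}(3,3,2,3)=6$) and is positive for non-torsion points, $h(P_0)\ge1/6$; as $h(P)=n^2h(P_0)$, this forces $n=1$. Therefore $\mathrm{MW}(\EE_3/\QQ(b))=\langle P\rangle\oplus\langle T\rangle\cong\ZZ\times\ZZ/3\ZZ$.

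The step I expect to be the main obstacle is the bookkeeping around the singular fibres and the height pairing: classifying the additive fibre at $b=0$ as type $IV$ (Tate's algorithm on $c_4,c_6$) and pinning down which components of the reducible fibres the sections $T$ and $P$ meet. The placements for $T$ can be cross-checked against the fact that $T$, being torsion, has height $0$, i.e.\ $\sum_v\mathrm{contr}_v(T)=2$, which with the above fibre types forces $T$ to meet non-identity components exactly at $b=0,-1,\infty$. By contrast, the isomorphism identity and the torsion bound over $\QQ$ are routine verifications (suited to a computer algebra system), and the Shioda--Tate rank bound could instead be obtained by specialization and descent.
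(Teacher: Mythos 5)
Your proposal is correct, and for most of its length it runs parallel to the paper's proof: direct verification of the isomorphism (\ref{ce:isom}) together with the explicit inverse map (your denominator $D=(b+2)(3bz+2)+(3b+4)c$ is exactly the paper's $bc+(b+2)(3bz+2c+2)$ expanded), the discriminant $2^{18}3^3b^4(b+1)^3(b+2)^2$, the fibre types $IV, I_3, I_2, I_3$ at $b=0,-1,-2,\infty$ with $\sum_v(m_v-1)=7$, the Shioda--Tate count $10-2-7=1$, and the flex-point argument for $3$-torsion of $\big(0,16b(b+1)\big)$. Where you genuinely diverge is in certifying that $\big(8b,8b(b+2)\big)$ is a free generator. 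The paper invokes \cite[Theorem 8.33]{SchuttShioda} to bound generators by polynomial sections with $\deg u\le 2$, $\deg v\le 3$, enumerates the eight admissible $u$-values in (\ref{eq:ec3bv}), and checks group relations among the resulting points; you instead compute the canonical height directly, $h(P)=2-\frac23-\frac12-\frac23=\frac16$, from the component data (your identification of which reducible fibres the section passes through the Weierstrass singular point at --- $b=0,-2,\infty$ but not $b=-1$ --- checks out), and observe that $\frac16$ is the minimum possible positive value of the height given the local contribution denominators, which forces $P$ to generate modulo torsion. You also supply an explicit torsion bound by specializing at $b=2$ and counting points mod $5$ and $7$ (both counts, $9$ and $12$, are correct), which the paper leaves implicit in its ``investigation of group relations.'' Your height-minimality argument is more self-contained and arguably cleaner for certifying this one generator; the paper's enumeration has the side benefit of producing the full list of low-degree sections, which it reuses later to manufacture families of Belyi maps.
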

\begin{proof}
It is straightforward to check the isomorphism (\ref{ce:isom}). 
The inverse map is given 
\begin{align}
u=\frac{16b(b+1)(bz+2z+c)}{bc+(b+2)(3bz+2c+2)},\qquad v=\frac{32b(b+1)(b+2)}{bc+(b+2)(3bz+2c+2)}.
\end{align}
As a side note, the elliptic involution $v\mapsto -v$ corresponds to the hypergeometric symmetry (\ref{eq:hpgsym2})
under this isomorphism.

The canonical Weierstrass form $v^2=u^3+Au+B$ can be obtained by applying the shift $u\mapsto u-3b^2$ in (\ref{eq:gell}).
The discriminant  \cite[III.1]{SilvermanA} of the elliptic surface equals $\Delta=2^{18}3^3b^4(b+1)^3(b+2)^2$.
By considering $A,B,\Delta$ and the $j$-invariant in (\ref{eq:ec3j}), 
we can use \cite[Table 5.1]{SchuttShioda} and conclude that the singular fibers $b\in\{0,-1,-2,\infty\}$ 
have the Kodaira types IV, I$_3$, I$_2$, I$_3$, respectively.
By \cite[Fig.~5.1--5.2]{SchuttShioda} they have, respectively, $m_v=3,3,2,3$ irreducible components in the Kodaira-Neron model. 
By Shioda-Tate formula \cite[Corollary 6.7]{SchuttShioda}, the Mordell-Weil rank equals $10-2-7=1$ as the Neron-Severi rank $\rho$ is $10$ 
for rational elliptic surfaces \cite[\S 7.2]{SchuttShioda}. 
By \cite[Theorem 8.33]{SchuttShioda}, the Mordell-Weil group is generated by polynomial points $(u,v)$ with $\deg u\le 2$, $\deg v\le 3$.
Direct computation with undetermined coefficients gives  8 possibilities for such $u$:
\begin{align} \textstyle\!  \label{eq:ec3bv}
u\in\{0,8b,-16b,4(b+1),16(b+1),-8b(b+1),16b(b+1),\frac{16}{9}(1-b)(1+2b)\}.
\end{align}
The points with $u=0$ are flex points with the tangents \mbox{$\pm v=3bu-16b(b+1)$,}  hence they are 3-torsion points.
An investigation of the group relations between all candidate points shows that 
we can take a point with $u\in\{8b$, $16(b+1)$, $-8b(b+1)\}$ for a free generator. 
\end{proof}

Taking $b\in\QQ\setminus\{0,-1,-2\}$ in (\ref{eq:gell}) gives an elliptic curve over $\QQ$,
with the same Mordell-Weil group typically \cite[III.11]{SilvermanA}. 
But the Mordell-Weil group group over $\QQ$ could be larger than the projected set of points 
from the elliptic surface. 
For example, $b=-7$ gives the Mordell-Weil group $(\ZZ/3\ZZ)\times \ZZ^2$,
as we will consider in Example \ref{eq:pqrr113}.

It is instructive to consider several simplest sections $(u,v)$ on $\EE_3$, parametrized by $b$,
and map them to the original surface $\SU_3$ defined by 
(\ref{eq:hpg3}). Here are a few obtained non-degenerate $c$-values of low degree in $b$:
\begin{equation}
\frac{b+2}{4b-1}, \  -\frac{4b(b+2)}{5b+1}, \  -\frac{27b(b+1)}{16b^2+19b+1}, \  -\frac{(2b+1)^3}{7b^2+b+1}, \ 
-\frac{2(b-1)^3}{2b^2+17b-1}.
\end{equation}
After substituting these values into (\ref{eq:hpg3}), there is a linear factor in $z$.
That parametrizes fully the corresponding section in $(c,z)$ on $\SU_3$.

If we have a root described by (\ref{ce:isom}) of the cubic hypergeometric polynomial with fixed $c$ (and possibly $b$),
the other two roots are equal to
\begin{equation} \label{eq:z23}
z_{2,3}=\frac{1}{2v}\left(v+(3b-2)u-16b(b+1)\pm u\,\sqrt{\frac{3u}{b+1}-12}\right).
\end{equation}
To seek rationality of these roots, we may replace $u=(b+1)(w^2+12)/3$ 
and obtain a family of hyperelliptic curves $(w,v)$ of genus 2.
One may go through a limited list of Mordell-Weil points on $\SU_3$ (be it specialized, with $b=-5/2$ for example)
and check that apparently complete factorization of the cubic polynomial happens generically only in degenerate cases 
such as $c\in\{-1,-b-1\}$. We need $b$ to be expressible as in (\ref{eq:bcuv}).  
The family (\ref{eq:bcwt}) corresponds to the infinite point on the elliptic surface.

For fixed $c\not\in\{0,-1,-2\}$ equation (\ref{eq:hpg3}) has degree 6,
but it defines an isomorphic elliptic surface by the symmetry (\ref{eq:hpgsym1}) of hypergeometric polynomials.
One can take $b\leftrightarrow c$, $z\mapsto 1/z$ in Proposition \ref{th:hypergell}.

\section{Quartic hypergeometric polynomials}
\label{sec:hpg4}

A compact polynomial form of the equation $\:\hpg21{-4,\;b}{-c-3}{z}=0\;$ is
\begin{equation}  \label{eq:bc4z}
\big((bz+c)^2+3bz^2+3c\big)^2+2(bz^2+c)^2+8bcz(z-1)^2+6(bz^4+c)=0.
\end{equation}
Let us denote this surface by $\SU_4$. 
We will associate two elliptic surfaces $\EE_4$ and $\EE^*_4$ to it.

\subsection{Elliptic fibration by $z$}

For fixed $z$, the curve has genus 1 generically. 
We identify $\SU_4$ directly as an elliptic surface.
As such, it is isomorphic to
\begin{align} \label{eq:el4}
\EE_4:  \quad v^2 = u^3-20Z\,u^2+108Z^2\,u-648(Z-1)^2.
\end{align}
where  $Z=z^2-z+1$.
The $j$-invariant is rather untidy in the denominator:
\begin{equation} \label{eq:es4jd}
-\frac{2^8\,19^3\,Z^6}{3^4\,(72Z^6 - 860Z^5 + 3907Z^4 - 9608Z^3 + 13122Z^2 - 8748Z + 2187)}.
\end{equation}
An isomorphism is given by 
\begin{equation}
b=-1+\varphi_4(z,u,v), \qquad c =-1+\varphi_4\!\left(\frac1z,\frac{u}{z^2},\frac{v}{z^3}\right),
\end{equation}
with
\begin{equation}
\varphi_4(z,u,v)=\frac{2\big(u^2-4Z^2\,u+8Z^3-12Z^2\big)(1-2z)v+2W_1}
{z(1-z)\,\big(u^2-12Zu+12Z^2\big)^2},
\end{equation}
where
\begin{align}
\! W_1 = & \;(5Z-3)u^3+6\,(2Z^3-24Z^2+27Z-9)u^2 \\
& -4Z(20Z^3-159Z^2+171Z-5) u +72Z^2(Z-1)(2Z^2-12Z+9). \quad\nonumber 
\end{align}
The infinite point on (\ref{eq:el4}) is mapped to the central point $(b,c)=(-1,-1)$ among the degenerations.
The parametric expression for $bz+c$ simplifies to
\begin{equation}
bz+c=\frac{2Zv-(z+1)\big(u^2-2(8Z-3z)\,u+36(z-1)^2Z\big)}
{u^2-12Zu+12Z^2}.
\end{equation}

As we consider in the next subsection, $\EE_4$ is a rational elliptic surface.
Its Mordell-Weil group has no torsion and the maximal rank 8 for elliptic surfaces over $\CC\PP^1$,
as evident \cite[\S 7.3]{SchuttShioda} from the irreducible degree 12 denominator of the $j$-invariant (\ref{eq:es4jd}),
meaning that the surface $\EE_4$ has 12 singular fibers of Kodaira type $I_1$.
By \cite[Theorem 7.12\refpart{i}]{SchuttShioda}, 
there are 240 candidate points with a polynomial coordinate $u(z)\in\CC[z]$ of degree $\le 2$
to generate the Mordell-Weil group. Computations show that 60 of them are defined over $\QQ(z)$. They have
\begin{equation}
u \in \{6,9,18,54,9z^2-6z+9,18z^2+12z+18,2z^2+4z+6,6z^2+12\}, 
\end{equation}
or can be obtained by further applying the hypergeometric symmetries (\ref{eq:hpgsym1}), (\ref{eq:hpgsym2})--(\ref{eq:hpgsym6}),
generated by $u(z)\mapsto z^2\,u(1/z)$ and $u(z)\mapsto u(1-z)$. 
Further, the two points with $u=8z^2-8z+8$ are defined over  $\QQ(\sqrt6,z)$,
and the points with $u=0$ are defined over $\QQ(\sqrt{-2},z)$. 
Additionally, 64 points have $u\in\QQ(\sqrt6)[z]$, 64 points have $u\in\QQ(\sqrt{-2})[z]$,
and 48 points have $u\in\QQ(\sqrt6,\sqrt{-2})[z]$.
The Mordell-Weil group over $\QQ(z)$ has rank 6. It is generated by points with $u\in \{6,6z^2,6(z-1)^2,9,9z^2,9(z-1)^2\}$.
Examples of simpler  rational sections on the original surface (\ref{eq:bc4z}) have these non-degenerate values of $b$:
\begin{align} \label{eq:r6gen}
& \frac{(z+1)(z+2)}{3z\,(1-z)}, \quad  
\frac{3(z+1)(3z+2)}{z\,(1-z)},\quad  \frac{6(z+1)(3z-2)}{25z\,(1-z)},\quad 
\frac{2(2z+1)(4z-3)}{z\,(1-z)(2z^2-3)^2}, \; \nonumber \\
& \frac{(2-z)(z^2-4z+1)(z^2-6z+3)}{z\,(5z^2-2z-1)^2},\quad 
\frac{2z\,(2z+1)(6z^3+z^2-2)}{(1-z)(3z^2-2)^2}.
\end{align}
The corresponding $c\,$-coordinate is obtained from a linear factor of (\ref{eq:bc4z}) that arrises after substituting $b$.
Due to the hypergeometric symmetry (\ref{eq:hpgsym1}), 
some possible values for $c$ can be obtained by substituting $z\mapsto 1/z$ in (\ref{eq:r6gen}).

\subsection{The rational surface}

The discriminant of the elliptic surface (\ref{eq:el4}) is of degree 12 in $z$; see the denominator of (\ref{eq:es4jd}).
Therefore it is a rational surface \cite[\S 7.5]{SchuttShioda} that can be obtained from a pencil of cubic curves in $\PP^2$
(linearly parametrized by $z$) by blowing up the 9 intersection points of the pencil. 
Rather equivalently \cite{Schicho05}, it is a Del Pezzo surface
in the weighted projective space with \mbox{weights$(z:1:u:v)=(1:1:2:3)$,} of degree 1.

The surface $S_4$ is birational to the elliptic surface (\ref{eq:el4}) in the Weierstrass form,
hence it can also be obtained from the pencil of cubic curves in $\PP^2$.
To obtain a rational parametrization of $S_4$, we first notice the simpler defining equation
\begin{equation}
f\,(3 f+6(z^2+z+1)+8e z+6e^2+14e)+e(e+1)(e+2)(e+3)=0
\end{equation}
in the coordinates
\begin{equation}
e=bz+c,\qquad f = bz(z-1).
\end{equation} 
The new equation defines a Del Pezzo surface
in the weighted projective space with weights$(z:e:1:f)=(1:1:1:2)$, of degree 2.
It contains 4 lines in the hyperplane $f=0$. Choosing the line $f=e=0$ to blow down,
we apply a standard step (called {\em unprojection} in \cite[\S 5]{Schicho05}) in resolving Del Pezzo surfaces
by introducing the coordinate
\begin{equation}
w=\frac{f+2(z^2+z+1)}{e}
\end{equation} 
We can eliminate $f$ straightaway and obtain the following non-singular cubic surface in $\PP^3$:
\begin{equation}
(ew-2z^2-2z-2)(3w+8z+6e+14)+(e+1)(e+2)(e+3)=0.
\end{equation}
Subsequently, we can blow down one of the lines on the plane $3w+8z+6e+14=0$,
or apply a classical parametrization recipe \cite[\S 10.5.3]{SchuttShioda} using two skew lines on the cubic surface, 
say  $e=3w+8z=-2$ and $w=e+1=-z-1$. Eventually, a suitable simplified cubic pencil 
is defined by $U+zV=0$, where 
\begin{align} \label{eq:cp4}
U= &\; 2t^2y - 6t^2 + 4ty - 3y^2 + 3y, \\
V= &\; ty^2+4t^2-2ty+3t-6y.
\end{align}
A parametrization of $S_4$ is given by $z=-U/V$ and
\begin{align} \label{eq:bcp4}
b= &\; \frac{3(y^2-2t+y)(ty^2-8t^2+4ty-3y^2+3t+3y)}{U\,(U+V)}, \\ 
\label{eq:bcp4a}
c= &\; -\frac{6(4t^2+2t-3y-3)(t^2y-t^2+ty-y^2)}{V\,(U+V)}.
\end{align}
The inverse map is given by
\begin{align}
t= &\; \frac{3(b+1)}{2}-\frac{3(c+1)(c+2)}{2bz}+\frac{3(b+c+1)(b+c+2)}{2b\,(z-1)}, \\
y= &\, -c+\frac{(b+2)(b+3)z}{c}+\frac{(b+c+2)(b+c+3)z}{c\,(z-1)}.
\end{align}
Computations with two versions of $(t,y)$ in (\ref{eq:cp4})--(\ref{eq:bcp4}) 
show that the hypergeometric symmetries (\ref{eq:hpgsym1}), (\ref{eq:hpgsym2})--(\ref{eq:hpgsym6})
are realized by non-linear Cremona \cite{Cremona02} transformations of the parameters $t,y$. 
For example,  (\ref{eq:hpgsym1}) is realized by
\begin{equation}
(t,y)\mapsto \left( \frac{t(y^2-ty^2-3t+3y)}{2(t^2y-t^2+ty-y^2)},\frac{(3-2t)(ty^2-y^2+3t-3y)}{ty^2-8t^2+4ty-3y^2+3t+3y} \right) \! ,
\end{equation}
and  (\ref{eq:hpgsym2}) is realized by
\begin{equation}
(t,y)\mapsto \left( t,\,\frac{ty-4t^2-3t+3y}{ty-t-3} \right).
\end{equation}

\subsection{The fibration by $b$}
\label{sec:hpg4fb}

Equation (\ref{eq:bc4z}) with fixed $b$ defines a quartic curve of the generic genus 3.
We can reduce the genus to 1 by factoring out the hypergeometric symmetry (\ref{eq:hpgsym2}).
This gives an elliptic surface isomorphic to 
\begin{equation} \label{eq:ec4sy}
\EE^*_4:  \quad v^2 = u\,\big(u^2-4b(5b+9)u+108\,b(b+1)^2(b+2)\big).
\end{equation}
The symmetry invariants are parametrized as follows:
\begin{align} \label{eq:h4fbz}
z(1-z)= & -6\,\frac{v+2(2b+3)u-36\,b(b+1)(b+2)}{W_2}, \\ 
\label{eq:h4fbc}
c\,(b+c+3) = & -216\,\frac{b\,(b+1)(b+2)(b+3)^2}{W_2}, \\  
\! cz\!+\!(b\!+\!c\!+\!3)(z\!-\!1) = & -(b+3)\,
\frac{2(4b\!+\!3)v+u^2\!-\!4b(b\!+\!3)u+108b(b\!+\!1)(b\!+\!2)(b\!+\!5)}{W_2}. 
\end{align}
where $W_2=8bv+u^2-4b(b+9)u+108\,b(b+1)(b+2)(b+9)$. 
The inverse projection is given by 
\begin{align}
u= &\; -6b(b+1)(b+2)\frac{(bz+c+3z)^2}{c\,(b+c+3)}, \\
v= &\, 12b\,(b+1)(b+2)(b+3)\frac{(bz+c+3z)(2bz+2c+3)}{c\,(b+c+3)}.
\end{align}
Rational points on the quartic curves can be found by trying to lift from the rational fibers on the elliptic surface.
Its Mordell-Weil group can be determined similarly as in the proof of Theorem \ref{th:hypergell}.
The discriminant of the elliptic surface equals $2^{13}3^6b^3(b+1)^4(b+2)^2(b+3)^3$, and the $j$-invariant equals 
\begin{equation}
-\frac{32\,(19b^3+36b^2-81b-162)^3}{729\,(b+1)^4(b+2)^2(b+3)^3}.
\end{equation}
Using \cite[Table 5.1]{SchuttShioda} we conclude that the singular fibers $b\in\{0,-1,-2,-3\}$ 
have the Kodaira types III, I$_4$, I$_2$, I$_3$, respectively. They have, respectively, $m_v=2,4,2,3$ irreducible components.
The Mordell-Weil rank equals $10-2-7=1$. The candidates for the generators  have
\begin{align} 
u\in  \{ & \textstyle 0, \; 9(b+1)^2, \; \frac94(b-1)^2, \; 6b(b+1), \; 54b(b+1), \; 12b(b+2),\\
& 2(b+1)(b+2),\; 18(b+1)(b+2)\}. \nonumber
\end{align}
The Mordell-Weil group is isomorphic to $(\ZZ/2\ZZ)\times\ZZ$.
The group is generated by the 2-torsion point $(0,0)$ and a point having $u\in\{6b(b+1),18(b+1)(b+2)\}$. 
The point
\begin{equation} \label{eq:h4eg}
\big( 6b(b+1),12b(b+1)(b+3) \big)
\end{equation}
can be taken as a free generator.

The discriminant of the quadratic equation (\ref{eq:h4fbz}) for $z$ equals
\begin{equation}
\frac{(v+4bu+12u)^2}{(v+4bu)^2+864b(b+1)(b+2)u}
\end{equation}
on $\EE^*_4$. 
To find rational sections on the genus 3 curve, we need the denominator 
\begin{equation} \label{eq:h4tbs}
(v+4bu)^2+864b(b+1)(b+2)u
\end{equation}
to be a full square on $\EE^*_4$. 
The point $(u,v)=0$ leads to 
\begin{equation}
c=-\frac{b+3}2\left(1\pm \sqrt{\frac{b+1}{b+9}}\right), \qquad z=\frac12\pm \frac12\,\sqrt{\frac{b+1}{b+9}}.
\end{equation}
A rational section is obtained after the base change $b=(9\zeta^2-1)/(1-\zeta^2)$.


\section{Belyi maps}

Recall that Belyi maps are algebraic coverings $\varphi:C\to\PP^1$ that branch in the 3 fibers
$\varphi\in\{0,1,\infty\}$. We will consider Belyi of genus 0 only. 
The following characteristic property of Belyi maps of genus 0 follows from the Riemann-Hurwitz formula
\begin{lemma} \label{th:np}
A Belyi map 
of genus $0$ and degree $d$ has exactly $d+2$ distinct points in the $3$ fibers $\{0,1,\infty\}$.
\end{lemma}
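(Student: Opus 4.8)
The plan is to derive the statement directly from the Riemann--Hurwitz formula applied to $\varphi:C\to\PP^1$ with $C$ of genus $0$. First I would set up notation: let $d=\deg\varphi$, and for each of the three critical values $P\in\{0,1,\infty\}$ let $r_P$ denote the number of distinct points in the fiber $\varphi^{-1}(P)$, and let $n$ be the total number of distinct points in the three fibers, so $n=r_0+r_1+r_\infty$. The ramification contribution of a fiber with $r_P$ distinct points is $\sum_{x\in\varphi^{-1}(P)}(e_x-1)=d-r_P$, since the ramification indices in a fiber sum to $d$. Because $\varphi$ is a Belyi map, no ramification occurs outside $\{0,1,\infty\}$, so the total ramification is $\sum_{P}(d-r_P)=3d-n$.

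Next I would invoke Riemann--Hurwitz \cite[Theorem 5.9]{Silverman} for a degree-$d$ cover $C\to\PP^1$: $2g(C)-2=d(2\cdot 0-2)+\sum_x(e_x-1)$, i.e. $2g(C)-2=-2d+(3d-n)=d-n$. Setting $g(C)=0$ gives $-2=d-n$, hence $n=d+2$. That is exactly the claim: a genus-$0$ Belyi map of degree $d$ has precisely $d+2$ distinct points in the three fibers.

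The only subtlety worth flagging — rather than a genuine obstacle — is making sure the ramification count is done correctly when a fiber contains unramified points: an unramified point contributes $0$ to $d-r_P$, which is already accounted for by the identity $\sum_{x\in\varphi^{-1}(P)}e_x=d$ holding regardless of ramification. I would also note that the argument shows more: it is the minimal possible number of points in three fibers for any genus-$0$ cover, which is consistent with the count $d+2$ already tallied in \eqref{eq:tp} for the specific families \eqref{eq:g11hm}--\eqref{eq:g2hm}, confirming those families are as sparse as a genus-$0$ Belyi map can be. No separate handling of the two normal forms is needed, since the lemma is purely about the combinatorics of the three fibers and holds for every genus-$0$ Belyi map.
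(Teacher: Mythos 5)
Your proof is correct and is exactly the argument the paper has in mind: the paper states the lemma without a written proof, merely asserting that it "follows from the Riemann--Hurwitz formula" (citing the same \cite[Theorem 5.9]{Silverman}), and your computation $-2=-2d+(3d-n)$ is the standard way to make that assertion explicit. The side remark that $d+2$ is also the minimum for an arbitrary genus-$0$ cover matches the paper's own comment surrounding (\ref{eq:tp}).
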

As summed up in (\ref{eq:tp}), 
the considered maps (\ref{eq:g11hm}) and (\ref{eq:g2hm})
satisfy the condition of this lemma. The degree of (\ref{eq:g11hm}) 
equals
\[
d=\max(|p|,|q|,|p+q|,m|r|,|p+mr|,|q+mr|,|p+q+mr|),
\]
while the degree of (\ref{eq:g2hm}) equals $\max(2|p|,m|r|,|2p+mr|)$.
Without loss of generality, we may assume $\max(|p|,|q|\le|p+q+mr|$. 

\subsection{Easy maps}

The simplest Belyi maps are the power functions $\varphi(x)=x^p$. 
They have just 2 points in the two fibers $\varphi=0$ and $\varphi=\infty$.
Our considered rational maps can  be viewed as a close neighborhood of this exemplar
in the landscape of Belyi maps.
\begin{example} \rm
The Belyi maps with exactly 3 points in two fibers are easy to find \cite{LandoZvonkin}.
They necessarily have two points with some branchong orders $k,q$ in one fiber (say $\varphi=0$)
and one point (say $x=\infty$) of branching order $k+q$ in other fiber (say $\varphi=\infty$).
There must be then $k+q-1$ distinct points in third fiber $\varphi=1$ by Lemma \ref{th:np},
hence there we have exactly one branching point, of order 2.
The most compact expression for the Belyi map is obtained after choosing
this branching point to be $x=0$:
\begin{align} \label{eq:1mf}
\varphi(x)= &\; (1-q x)^p(1+px)^q \\
 = &\; 1+O(x^2). 
\end{align}
This form matches the case $m=0$, $\lambda=p/q$ of (\ref{eq:g11hm}), 
where we take $p,q$ to be positive, and rescale $x\mapsto x/q$. 
\end{example}

\begin{example} \rm \label{ex:1m1}
Consider Belyi maps of the more general form
\begin{align} \label{eq:1ma}
\varphi(x)= &\; (1-qx)^p\,F_m(x)^q \\
 \label{eq:1mb} 
 = &\; 1+O(x^{m+1}), 
\end{align}
where $F_m(x)$ is a polynomial or degree $m$. 
If $p,q$ are positive integers, then these maps have one point of order $p$ and $m$ points of order $q$ 
above $\varphi=0$, a single point of order $p+mq$ above $\varphi=\infty$, 
and a branching point of order $m+1$ above $\varphi=1$.
The other points above $\varphi=1$ are non-branching by  Lemma \ref{th:np}.
By the equality of (\ref{eq:1ma}) and (\ref{eq:1mb}) we have
\begin{equation} \label{eq:1mc}
F_m(x)=(1-qx)^{-p/q} \quad \mbox{mod}\ x^{m+1}.
\end{equation}
This means that the polynomial $F_m(x)$ equals the truncated Taylor series of $(1-qx)^{-p/q}$ at $x=0$.
Explicitly,
\begin{align} \label{eq:1md}
F_m(x) = \sum_{k=0}^m \frac{q^k\big(p/q\big)_k}{k!}\,x^k.
\end{align}
Following our interpretation of hypergeometric polynomials
(\ref{eq:degenpo}), we can write
\begin{equation} \label{eq:truncp}
F_m(x)=\hpg21{-m,\,\frac{p}q\,}{-m}{qx}.
\end{equation}
\end{example}

\begin{remark} \rm
We may allow $p$ or $q$ to be negative integers in (\ref{eq:1ma}). That just means that the corresponding point(s) are 
above $\varphi=\infty$ rather than above $\varphi=0$. Formulas (\ref{eq:1mc}) still hold then.
The point $x=\infty$ remains above $\varphi=\infty$ if $p+mq>0$, 
and it moves to the fiber $\varphi=0$ if $p+mq<0$.
\end{remark}
\begin{remark} \label{rm:pfp} \rm
If we take $p=0$ or $p+mq=0$, then we have $m+1$ (rather than $m+2$) points in the two fibers $\varphi\in\{0,\infty\}$.
The branching order of $x=0$ can be at most $m$ then, and (\ref{eq:1mc}) should be modified to $F_m(x)=1+O(x^m)$.
We obtain the Belyi maps $\varphi(x)=(1-x^{m})^q$, after scaling $x$ additionally.
\end{remark}

\begin{example} \rm  \label{ex:pqr111}
Now we look at the Belyi maps of the form
\begin{equation} \label{eq:ab1b}
\varphi(x)=(1-x)^p(1-\lambda x)^q(1-\mu x)^r,
\end{equation}
with $\varphi(x)=1+O(x^3)$. This is the case $m=1$ of (\ref{eq:g11hm}).
The series expansion of (\ref{eq:ab1b}) starts with
\begin{equation}
1-(p+\lambda q+\mu r)\,x+\frac12 \left( (p+\lambda q+\mu r)^2- p-\lambda^2 q-\mu^2 r \right) x^2+\ldots.
\end{equation} 
The coefficients to $x,x^2$ must vanish. 
Eliminating $\lambda$ we get a quadratic equation for $\mu$. 
Its discriminat equals $-pqr(p+q+r)$, hence
the Belyi maps are expressed with $\sigma=\pm\sqrt{-pqr(p+q+r)}$. Explicitly,
\begin{equation}
\varphi(x)=\big(1-x)^p\,\left(1+\frac{pq+\sigma}{q(q+r)}\,x\right)^q
\,\left(1+\frac{pr-\sigma}{r(q+r)} \,x\right)^r.
\end{equation}
These Belyi maps with the definition field $\QQ(\sigma)$ are obtained in \cite[Example 2.2.25]{LandoZvonkin}. 
If $p,q,r$ are positive integers, the definition field is an imaginary quadratic extension of $\QQ$. 
But the Belyi maps could be defined over $\QQ$ if both positive and negative powers are prescribed.
For example, here is a nice family of paired Belyi maps with $p={n+1\choose 2}$, $q={n\choose 2}$, $r=-1$:
\begin{align}
\frac{(1+x)^p\,(1-x)^q}{1+nx},\qquad
\frac{\big(1+(n-2)x\big)^p\,\big(1-(n+2)x\big)^q}{1-n^2x}.
\end{align}
A general rational parametrization of the triples $(p,q,r)$ with $\rho\in\QQ$ is obtained 
by parametrizing the singular cubic surface $y^2+st(s+t+1)=0$, identifying \mbox{$s=p/r$,} \mbox{$t=q/r$.} 
Here is such a parametrization by $u=\sigma/qr$, $v=\sigma/pr$, up to simultaneous scaling of $p,q,r$:
\begin{equation}
p=u(uv+1),\quad q=v(uv+1),\quad r=-u-v. 
\end{equation}
Then $\sigma=-uv(u+v)(uv+1)$. After rescaling $x$ in the two forthcoming Belyi maps, 
we obtain the expressions
\begin{align}
\big(1-(v+1)x)^u\,\big(1+(u-1)x\big)^v
\,\big(1-(uv+1)x\big)^{-(u+v)/(uv+1)}, \\
\big(1+(v-1)x)^u\,\big(1-(u+1)x\big)^v
\,\big(1-(uv+1)x\big)^{-(u+v)/(uv+1)},
\end{align}
to be raised to a common power so to make the three powers integral. 
The cases with $\sigma=0$ fall outside the considered shape (\ref{eq:ab1b}),
and the cases \mbox{$(u+v)(u^2-1)(v^2-1)=0$} with coalescing branching points require special attention.
In the latter situation 
we have $(p+q)(q+r)(q+r)=0$.
\end{example}

\begin{remark} \rm  \label{rm:pqr111}
If $p+q=0$ in the last example, then $\sigma=\pm pr$.
Taking $\sigma=pr$ gives the trivial function $\varphi(x)=1$. 
Taking $\sigma=-pr$ gives
\begin{equation}
\varphi(x)=\frac{\big(1-(p-r)x)^p\,\left(1-2px\right)^r}{\left(1-(p+r) x\right)^{p}}.
\end{equation}
In the cases $p+r=0$ or $q+r=0$ we also have just one Belyi map analogously.
If simultaneously $p+q=p+r=0$, both candidate functions collapse to $\varphi(x)=1$.
\end{remark}

\section{Belyi maps of the form (\ref{eq:g11hm})}

Belyi maps of the shape (\ref{eq:g11hm}) generalize of Example \ref{ex:pqr111} from $m=1$. 
We assume that the numbers $p,q,r,p+q+mr$ are non-zero,
so that there are indeed $m+3$ distinct points above $\varphi=0$ and $\varphi=\infty$. 
The points $x=1$, $x=1/\lambda$, $x=\infty$ and their branching orders $p$, $q$, $-p-q-mr$ are permutable.
We also assume $p\neq q$, $p\neq q+mr$, $q\neq p+mr$, as these cases are considered in Section \ref{sc:main2}
with the smallest field of definition.

The condition (\ref{eq:bmpws}) translates into the power series relation
\begin{equation} \label{eq:g11hms}
G_m(x) =(1-x)^{-p/r}(1-\lambda x)^{-q/r}\quad \mbox{mod } x^{m+2}.
\end{equation}
The power series term with $x^{m+1}$ has to equal $0$ then. 
The polynomial $G_m$ is determined uniquely by the power series 
\begin{equation} \label{eq:pw11}
(1-x)^{-p/r}(1-\lambda x)^{-q/r}=  \sum_{k=0}^{\infty} h_kx^k
\end{equation}
truncated at the $(m+1)$-the term. 
\begin{lemma} \label{th:nexist}
No Belyi map of the form $(\ref{eq:g11hm})$, $(\ref{eq:bmpws})$  
exist when $p= -\ell_1 r$ 
and $q=-\ell_2 r$ for some positive integers $\ell_1,\ell_2$
satisfying $\ell_1+\ell_2\le m$.
\end{lemma}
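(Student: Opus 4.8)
The plan is to exploit the explicit formula $(\ref{eq:g11hms})$ together with the hypergeometric reformulation $(\ref{eq:g11hm21})$. When $p=-\ell_1 r$ and $q=-\ell_2 r$, the power series on the right-hand side of $(\ref{eq:pw11})$ becomes $(1-x)^{\ell_1}(1-\lambda x)^{\ell_2}$, which is a \emph{polynomial} of degree $\ell_1+\ell_2$. Since the existence of a Belyi map requires that the coefficient of $x^{m+1}$ in this series vanishes, and since $\ell_1+\ell_2\le m<m+1$, the coefficient of $x^{m+1}$ is automatically $0$; so the vanishing condition imposes no constraint, and one might naively think a map always exists. The point is the opposite: the condition $(\ref{eq:g11hms})$ forces $G_m(x)=(1-x)^{\ell_1}(1-\lambda x)^{\ell_2}$ \emph{exactly} (not just mod $x^{m+2}$), because a polynomial of degree $\le m+1<m+2$ that agrees with $G_m$ mod $x^{m+2}$ must equal $G_m$. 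But then $G_m$ has multiple roots as soon as $\ell_1\ge 2$ or $\ell_2\ge 2$, and more fundamentally $G_m$ has degree $\ell_1+\ell_2<m$ rather than the required degree $m$; either way the shape $(\ref{eq:g11hm})$ with $G_m$ of degree exactly $m$ without multiple roots is violated. Equivalently, the substitution $\varphi(x)=(1-x)^{p}(1-\lambda x)^{q}G_m(x)^r$ would collapse to $\varphi(x)=G_m(x)^r/\big((1-x)^{\ell_1}(1-\lambda x)^{\ell_2}\big)^{-r}=1$ identically, a degenerate "Belyi map" that does not meet the standing hypotheses.

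Concretely, the steps I would carry out are: (1) Substitute $p=-\ell_1 r$, $q=-\ell_2 r$ into $(\ref{eq:pw11})$ and observe that $(1-x)^{\ell_1}(1-\lambda x)^{\ell_2}$ is a polynomial of degree $\ell_1+\ell_2$. (2) Note that $(\ref{eq:g11hms})$ determines $G_m$ uniquely as the truncation of this series at order $x^{m+2}$; since the series already terminates at degree $\ell_1+\ell_2\le m<m+1<m+2$, the truncation \emph{is} the full polynomial, so $G_m(x)=(1-x)^{\ell_1}(1-\lambda x)^{\ell_2}$. (3) Conclude that $\deg G_m=\ell_1+\ell_2<m$ (or, in the boundary subtlety below, that $G_m$ has a multiple root), contradicting the requirement in $(\ref{eq:g11hm})$ that $G_m$ have degree exactly $m$ and no multiple roots. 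Alternatively, phrase this via $(\ref{eq:g11hm21})$: with $p/r=-\ell_1$, $q/r=-\ell_2$ the hypergeometric equation reads $\hpgo21(-m-1,-\ell_2;m-\ell_1;\lambda)=0$, whose relevant solution set forces $\lambda$ into the degenerate locus; one checks the resulting $G_m$ is $(1-x)^{\ell_1}(1-\lambda x)^{\ell_2}$ as above. Either route closes the argument.

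The only delicate point — and the place I would be most careful — is the \emph{boundary case} $\ell_1+\ell_2=m$, where $(1-x)^{\ell_1}(1-\lambda x)^{\ell_2}$ does have degree exactly $m$. Here one cannot argue by a degree deficiency, and instead the obstruction is that this polynomial has a multiple root whenever $\min(\ell_1,\ell_2)\ge 2$, violating the "no multiple roots" clause; and when $\min(\ell_1,\ell_2)=1$, say $\ell_2=1$, $\ell_1=m-1\ge 2$, it has a root of multiplicity $m-1\ge 2$, again violating it, \emph{unless} $m=2$, $\ell_1=\ell_2=1$. In that last sub-case $G_2(x)=(1-x)(1-\lambda x)$, and then $\varphi(x)=(1-x)^{-r}(1-\lambda x)^{-r}\cdot\big((1-x)(1-\lambda x)\big)^{r}=1$ identically, i.e.\ $p=q=-r$, which is excluded by the standing assumption $p\neq q$. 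Thus in every sub-case of the boundary the standing hypotheses of $(\ref{eq:g11hm})$ are contradicted, and the lemma follows. I expect the bookkeeping of these boundary sub-cases, rather than any deep idea, to be the main (and only) obstacle.
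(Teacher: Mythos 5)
Your proof is correct and takes essentially the same route as the paper: the series (\ref{eq:pw11}) terminates as the polynomial $(1-x)^{\ell_1}(1-\lambda x)^{\ell_2}$ of degree $\ell_1+\ell_2\le m$, so $G_m$ is forced to equal it exactly, which is impossible when $\ell_1+\ell_2<m$ by degree count and inadmissible when $\ell_1+\ell_2=m$. The only cosmetic difference is the boundary case: the paper notes that $G_m$ then acquires the undue roots $x=1$ and $x=1/\lambda$ (coinciding with the distinguished points, which disposes of all sub-cases at once, including $\ell_1=\ell_2=1$), whereas you argue via multiple roots together with the standing assumption $p\neq q$ for the residual $m=2$ sub-case --- both are valid.
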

\begin{proof}
The left-hand side of (\ref{eq:pw11}) is the polynomial $(1-x)^{\ell_1}(1-ax)^{\ell_2}$ under the stated conditions. 
When $\ell_1+\ell_2<m$, the polynomial $G_m$ is forced to be of smaller degree than $m$.
If $\ell_1+\ell_2=m$ then $G_m$ has the undue roots $x=1$, $x=1/\lambda$.
\end{proof}
The coefficients $h_k$ in  (\ref{eq:pw11}) can be expressed as follows:
\begin{align} \label{eq:ck}
h_k= & \,\sum_{j=0}^k \frac{(p/r)_{k-j}\,(q/r)_j\,\lambda^j}{(k-j)!\,j!}, 
\end{align}
or 
\begin{align}  \label{eq:ckh}
h_k & = \frac{(p/r)_k}{k!}\,\hpg21{-k,\,\frac{q}{r}}{1-k-\frac{p}{r}}{\lambda}  \\
& = \frac{(q/r)_k\lambda^k}{k!}\,\hpg21{-k,\,\frac{p}{r}}{1-k-\frac{q}{r}}{\frac1{\lambda}}.
\end{align}
We must have $h_{m+1}=0$, which leads to the equation (\ref{eq:g11hm21}) generically.

\subsection{Full sets of Belyi maps}

Here we consider examples of the generic case (\ref{eq:g11cond}) with $m+1$ Belyi maps.
We are interested in the cases $\lambda\not\in\{0,1\}$ and $p\neq q$. 
The degenerate value $\lambda=1$ would coalesce the points of branching order $p$ and $q$,
but this point is excluded by Lemma \ref{th:ck11}\refpart{iii}. 
By Lemma \ref{th:ck11}\refpart{ii}, the hypergeometric polynomial $h_{m+1}(\lambda)$ 
has $m+1$ distinct roots
when 
\begin{equation} \label{eq:g11cond}
\left(\,\frac{p}{r}\,\right)_{\!m+1}\neq 0, \quad
\left(\,\frac{q}{r}\,\right)_{\!m+1}\neq 0, \quad
\left(\frac{p+q}{r}\right)_{\!m+1}\neq 0.
 \end{equation}
The condition $((p+q)/r)_{m+1}\neq 0$ is equivalent to 
\begin{equation}
\left(-\frac{p+q+mr}{r}\right)_{\!m+1}\neq 0,
\end{equation}
underscoring the symmetry of the branching orders $p$, $q$, $-p-q-mr$.

\begin{example} \rm \label{ex:pq2}
In the case $m=2$ we consider the equation (\ref{eq:hpg3}) specialized 
by $b=q/r$, $c=p/r$, $z=\lambda$:
\begin{equation} \label{eq:pq112}
q(q+r)(q+2r)\lambda^3+3pq(q+r)\lambda^2+3pq(p+r)\lambda+p(p+r)(p+2r)=0.
\end{equation}
The discriminant 
\[
-108p^2q^2r^3\,(p+r)\,(q+r)(p+q+r)(p+q+2r)^2
\]
is proportional to $(p/r)_3(q/r)_3((p+q)/r)_3$,
and it indeed vanishes only when we have less than 3 distinct roots,
and those roots $\lambda\not\in\{0,1\}$ Lemma \ref{th:ck11}\refpart{ii}.

Belyi maps defined over $\QQ$ can be found using the parametrization (\ref{eq:bcst}),
identifying $q=br$, $p=cr$, $\lambda=z$. 
Polynomial Belyi maps 
defined over $\QQ$ are obtained from the + regions (\ref{eq:shb1}) in Figure \ref{fig:sregions}\refpart{i}.
For example, $e=-z=\frac15$ and  $e=-z=\frac16$ produce these Shabat polynomials, respectively:
\begin{align}
(1-5x)^2(1+x)^{20}(1-2x+9x^2)^5, \\
(1-6x)(1+x)^{20}(1-2x+6x^2)^7.
\end{align}
Their companions with the same $p,q,r$ are defined, respectively, over $\QQ(\sqrt{-35})$ and  $\QQ(\sqrt{-2})$.
There are no triples of Shabat polynomials defined over $\QQ$, as there is no point $(t,y)\in\RR^2$
giving positive values for $b=q/r$ and $c=p/r$ by Figure \ref{fig:sregions}\refpart{ii}.
The family (\ref{eq:bcwt}) with $t\in\QQ$ provides most of the triples of Belyi maps defined over $\QQ$ 
with small absolute values of $p,q,r$.
For example, $t=2$ gives the case $(p,q,r)=(2,-7,6)$ with these 3 Belyi maps defined over $\QQ$:
\begin{align}
\frac{(1-x)^2(1-2x-\frac16x^2)^6}{(1-2x)^7}, \quad \frac{(1-x)^2(1+5x+\frac{10}3x^2)^6}{(1+4x)^7}, \quad
 \frac{(1-5x)^2(1-3x-\frac{2}3x^2)^6}{(1-4x)^7}.
\end{align}
Here are a few projective ratios  $(p:q:r)$ giving 3 Belyi maps defined over $\QQ$ outside the family (\ref{eq:bcwt}):
\begin{align} \textstyle
(1:-\frac78:\frac1{19}),\, (2:-\frac15:\frac1{37}), \, (1:-\frac{26}{31}:\frac{1}{37}), \, (1:-\frac{11}{41}:\frac{1}{43}), 
\, (1:-\frac{14}{17}:\frac{1}{61}),  \nonumber \\
 \textstyle
 (7:-\frac52:\frac{4}{13}), \;  (7:-\frac{65}{19}:3), \; (19:-\frac{35}{8}:1),\; (-19:-\frac{37}{7}:\frac12). 
\end{align}

Families of Belyi maps could be obtained from sections on the elliptic siurface $\EE_3$,
particularly from the $c$-values in (\ref{eq:ec3bv}). Taking $c=(b+2)/(4b-1)$ with the reparametrization $t=4b-1$
and independent rescaling of $x$ and of the powers $p,q,r$ gives
\begin{equation}
(1-2tx)^{t+9}\,\big(1+6x\big)^{t(t+1)} \left(1-(t-3)x-(t+3)^2x^2\right)^{4t}.
\end{equation}
Similarly, taking $c=-4b(b+2)/(5b+1)$ and $t=(5b+1)/(3b)$ gives
\begin{equation}
(1-tx)^{12-8t}\,\big(1-2(t-1)x\big)^{t} \left(1-2x-\frac{2(t-2)^2}{3t-5}x^2\right)^{\!t(3t-5)}.
\end{equation}
Or one may employ the whole 2-parameter uniformizations (\ref{eq:bcst}) or (\ref{eq:bcwt}).
\end{example}

\begin{example} \label{ex:surf4} \rm
Similarly, for $m=3$ we obtain the surface $\SU_4$ in (\ref{eq:bc4z}) with $b=q/r$, $c=p/r$, $z=\lambda$.
We can use the full parametrization (\ref{eq:bcp4})--(\ref{eq:bcp4a}), or a section of the elliptic surface $\EE_4$ in (\ref{eq:el4}).
For example, the first $b$-value from (\ref{eq:r6gen}) gives the family
\begin{align}
& (1-x)^{2t(t-2)(t+2) }\,\big(1-tx\big)^{(t+1)(t+2)}  \nonumber \\
& \qquad \textstyle \times \left(1-(t+2)x-\frac{(t+1)(t+2)}3x^2-\frac{(t+1)(t-2)(t+2)}9x^3\right)^{3t(1-t)},
\end{align}
with $t=z$. Here are some values of $b,c$ with one of them a positive integer,
found by an extensive search  through the parametrization (\ref{eq:bcp4}):
\begin{align*}
&  \textstyle (5,-\frac{7}{2}),\, (5,-\frac{7}{11}),\, (5,-\frac{14}{13}),\, (13,-\frac{13}{3}),\,
(13,-\frac{26}{5}),\, (14,-\frac{7}{2}),\, (15,-\frac{45}{43}),\, (19,-\frac{28}{3}), \quad \nonumber   \\
&  \textstyle (19,-\frac{19}{7}),\, (20,-\frac{92}{11}),\, (22, -\frac{40}{7}),\, 
(23,-\frac{25}{2}),\, (24,-\frac{117}{47}),\, (45,-\frac{256}{13}),\, (45, -\frac{207}{19}),\nonumber  \\
& \textstyle (45,-\frac{47}{51}),\, (51, -\frac{442}{71}),\, (54,-\frac{135}{7}),\,  (55,-\frac{35}{2}),\, (55,-\frac{99}{4}),\,
(56,-\frac{29}{5}),\, (56,-\frac{133}{5}), \nonumber  \\
& \textstyle (68,-\frac{92}{7}),\, (68,-\frac{355}{33}),\, (68,-\frac{85}{131}), \,
(76,-\frac{143}{5}),\, (77,-\frac{79}{2}),\, (84,-\frac{192}{5}),\, (91,-\frac{188}{41}).
\end{align*}
The negative integer values of $b,c$ are considered in Example \ref{ex:degd4}.
Here are some other found values of $b,c$, excluding the values $\{-1/2,-3/2,-5/2,-7/2,-9/2\}$ 
with reference to Examples \ref{ex:ec7} and \ref{ex:ec8}:
 \begin{align*}
&  \textstyle (\frac12,-\frac{8}{3}),\, (\frac12,-\frac{7}{5}),\, (\frac72,-\frac{26}{5}),\, (\frac{17}2,-\frac{3}{10}),\, 
(\frac43,-\frac{27}{7}),\, (\frac73,-\frac{13}{9}),\, (\frac73,-\frac{26}{5}),\, (\frac{14}3,-\frac{34}{9}), \quad \nonumber   \\
&  \textstyle \nonumber  (\frac{20}3,-\frac{23}{9}),\, (\frac{29}3,-\frac{63}{5}),\, (\frac{74}{3},-\frac{37}{2}),\, 
 (\frac{11}4,-\frac{40}7),\, (\frac{15}4,-\frac{45}7),\, (\frac{15}4,-\frac{46}7),\,  (\frac45,-\frac{19}{7}), \\
&  \textstyle (\frac76,-\frac{35}{9}),\,  (\frac85,-\frac{13}{4}),\, (\frac16,-\frac{9}{22}),\, 
 (\frac56,-\frac29),\, 
  (\frac17,-\frac{1}{4}),\, (\frac17,-\frac{11}{5}),\,  (\frac37,-\frac{17}{5}),\, 
 (\frac67, -\frac{18}{5}),
\nonumber  \\
& \textstyle 
(-\frac23, -\frac{7}{5}),\, (-\frac{5}{3},-\frac29),\,
(-\frac{8}{3},-\frac38),\, (-\frac14,-\frac{7}{5}),\, (-\frac54,-\frac{8}{11}),\,  (-\frac35,-\frac{4}{7}),\,
(-\frac76,-\frac{9}{10}). 
\end{align*}
\end{example}

\subsection{Cases with fewer Belyi maps}

The number of Belyi maps (\ref{eq:g11hm}) with $\lambda\not\in\{0,1\}$ is smaller than $m+1$ when 
at least one of the conditions (\ref{eq:g11cond}) is not satisfied. Indeed, if $q= -\ell r$ for a positive integer $\ell\le m$,
the polynomial $h_{m+1}(\lambda)$ has the degree $\ell$ rather than $m+1$ as the terms with $j>\ell$ in (\ref{eq:ck}) vanish then.
If $p= -\ell r$ for a positive integer $\ell\le m$, then the first $m+1-\ell$ terms in (\ref{eq:ck}) vanish, and
\begin{equation} \label{eq:ck1a}
h_{m+1}=\frac{(-1)^\ell\,(q/r)_{m+1-\ell}}{(m+1-\ell)!}\,\lambda^{m+1-\ell}\;\hpg21{-\ell,\,\frac{q}{r}+m+1-\ell}{m+2-\ell}{\lambda}.
\end{equation}
The factor $\lambda^{m+1-\ell}$ reduces the number of relevant roots to $\ell$. 
If $p+q= -\ell r$ for a positive integer $\ell\le m$, but $(p/r)_m\neq0$ and $(q/r)_m\neq0$, 
then we apply Euler's transformation (\ref{eq:eulr})  to the hypergeometric polynomial $h_{m+1}(\lambda)$ 
and obtain
\begin{align}  \label{eq:ck1aa}
h_{m+1} =  \frac{(p/r)_{m+1}}{(m+1)!}\,(1-\lambda)^{\ell+1}\;\hpg21{-m+\ell,\,1+\frac{p}{r}}{-m-\frac{p}{r}}{\lambda}. 
\end{align}
The factor $(1-\lambda)^{\ell+1}$ reduces the number of relevant roots to $m-\ell$. 
If \mbox{$p/r\in\ZZ$} and $q/r\in\ZZ$, then these possibilities may combine, 
giving lower degree of $h_{m+1}(\lambda)$ and multiple undue roots $\lambda=0$, $\lambda=1$. 
Figure \ref{fig:regions11} depicts the regions and line segments or rays on the integer lattice for the values of $p/r$ and $q/r$ 
where the number of Belyi maps (\ref{eq:g11hm}) is fixed $\le m+1$. 
Lemma \ref{th:nexist} applies to the visible triangle inside the dark middle region.

\begin{figure}
\begin{center}
\begin{picture}(320,250)
\put(40,-2){\includegraphics[width=246pt]{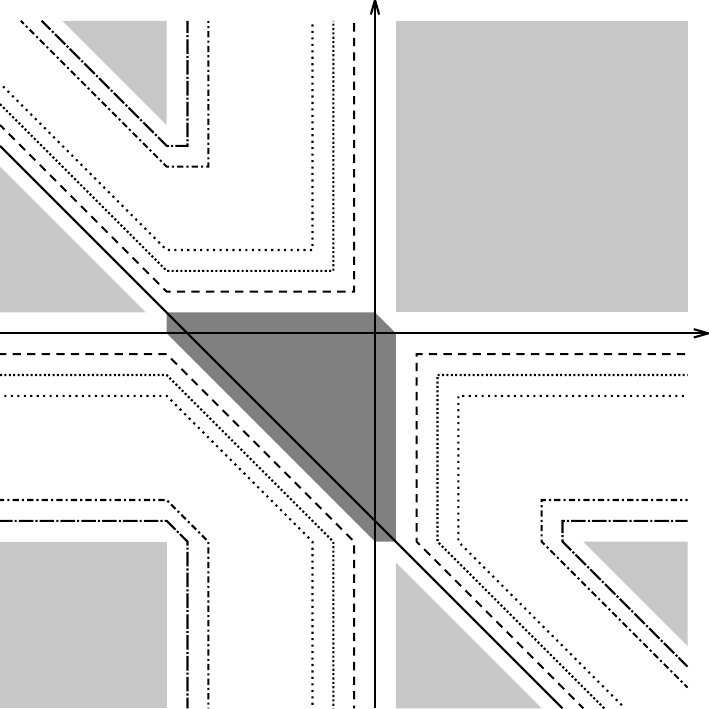}}
\put(283,135){\small $p/r$} \put(175,243){\small $q/r$} 
\put(282,119.5){\small $1$}  \put(282,111.5){\small $2$} \put(282,103.5){\small $3$}
\put(282,69){\small $m\!-\!1$} \put(282,61){\small $m$}   \put(283,38){\small $m\!+\!1$}
\put(282,3){\small $m\!-\!1$} \put(282,11){\small $m$} 
\put(33,119.5){\small $1$}  \put(33,111.5){\small $2$} \put(33,103.5){\small $3$}
\put(17,69){\small $m\!-\!1$} \put(30,61){\small $m$}   \put(17,26){\small $m\!+\!1$}
\put(33,200){\small $1$}  \put(33,208){\small $2$} \put(33,216){\small $3$} 
\put(17,160){\small $m\!+\!1$} \put(283,190){\small $m\!+\!1$} 
\put(33,192){\small $0$} \put(33,127.5){\small $0$} 
\end{picture}
\end{center}
\caption{The number of Belyi maps (\ref{eq:g11hm}) 
for integer values of $p/r$ and $q/r$.
The dark region in the middle, and the lines emanating from it, represent the cases with no Belyi maps.
The light grey regions represent $m+1$ Belyi maps; 
the dashed lines --- unique Belyi maps; the dense dotted lines --- pairs of Belyi maps;
the sparser dotted lines --- triples of maps; the two kinds of dashed-dotted lines: $(m-1)$ or $m$ maps.}
\label{fig:regions11}
\end{figure}

Let us use $b^*=q/r$ and $c^*=p/r$ for shorthand.
\begin{example} \rm \label{eq:pqrr111}
Let us consider the cases that have single Belyi maps. 
They are represented by some lattice points on the lines $p=-r$, $q=-r$, $p+q=(1-m)r$
or $p=2r$,  $q=2r$, $p+q=-(m+2)r$ in Figure \ref{fig:regions11}. 
It is enough to consider two cases, that is, one from both displayed triples of lines,
due to the hypergeometric symmetries (\ref{eq:hpgsym1}), (\ref{eq:hpgsym2})--(\ref{eq:hpgsym6}).
When $q=-r$ and $(c^*\!-1)_{m}\neq 0$, the hypergeometric polynomial $h_{m+1}$ 
is linear in $\lambda$. It gives $\lambda=(m+c^*)/(m+1)$. 
Following (\ref{eq:g11hms}), consider the power series
\begin{equation}
(1-x)^{-c^*}\left(1-\frac{m+c^*}{m+1}\,x\right) = 
1+\frac{1}{m+1}\sum_{k=1}^{\infty} \frac{(c^*\!-1)_k}{k!}(m+1-k)x^k.
\end{equation}
The term with $x^{m+1}$ indeed vanishes, and the earlier terms define $Q_m(x)$.

When \mbox{$p=2r$} and $(b^*)_{m+1}=0$, then (\ref{eq:ck1aa}) reads
\begin{equation}
\hpg21{-m-1,\,b^*}{-m-2}{\lambda}=(1-\lambda)^{-b^*\!-1}\,\hpg21{-1,-b^*\!-m-2}{-m-2}{\lambda}.
\end{equation}
The  hypergeometric polynomial on the right-hand side is linear, and gives
\mbox{$\lambda = (m+2)/(b^*\!+m+2)$.}
The implication for the power series of
\begin{equation} \label{eq:mcm2}
(1-x)^{-2}\left(1 -\frac{m+2}{b^*\!+m+2}\,x\right)^{-b^*}
\end{equation} 
is remarkable: the $k$-th term is divisible by $(b^*\!+1)_k$ when $m=k-1$.
Here is a revealing form of the starting terms: {\small
\begin{align}
& 1 +  \frac{m(b^*\!+2) + 4(b^*\!+1)}{b^*\!+m+2}  
+ \frac{(m-1)(b^*\!+3)\big(m\,(b^*\!+2)+9b^*\!+10\big) + 27(b^*\!+1)(b^*\!+2)}{2\,(b^*\!+m+2)^2} 
\nonumber\\
& \quad  +\frac{(m-2)(b^*\!+4)\big(m^2(b^*\!+2)(b^*\!+3)
 +  \ldots 
\big)  +  256(b^*\!+1)(b^*\!+2)(b^*\!+3)}{6\,(b^*\!+m+2)^3} +\ldots.
\end{align}}
\end{example}

\begin{example} \rm \label{ex:conic}
Here we consider the cases with two Belyi maps. 
They are represented by some lattice points on the lines $p=-2r$, $q=-2r$, $p+q=(2-m)r$
or $p=3r$,  $q=3r$, $p+q=-(m+3)r$ in Figure \ref{fig:regions11}. 
It is enough to consider two cases. When $q=-2r$ and $(c^*\!-2)_{m}\neq 0$,
the hypergeometric polynomial $h_{m+1}$ 
is quadratic in $\lambda$: 
\begin{equation} \label{eq:quadq2}
m(m+1)\lambda^2-2(m+1)(m+c^*\!-1)\lambda+(m+c^*)(m+c^*\!-1).
\end{equation}
The two solutions are
\begin{equation}
\lambda=\frac{m+c^*\!-1}{r} \pm \frac{\sqrt{(m+1)(c^*\!-1)(m+c^*\!-1)}}{m(m+1)}
\end{equation}
We have two Belyi maps are defined over $\QQ$ when
\begin{equation} 
c^*=\frac{m+1+(m-1)u^2}{m+1-u^2}.
\end{equation}
for some $u\in\QQ$. Then the $\lambda$-values are $(m+1\pm u)/(m+1-u^2)$. 
The power series then looks like this:
\begin{align}
(1-x)^{-c^*}\left(1-\frac{m+1+u}{m+1-u^2}\,x\right) =
1+\left(c^*\!-1-\frac{m+(u+1)^2}{m+1-u^2}\right)x \qquad\qquad \nonumber \\
 +\frac{1}{m}\,\sum_{k=2}^{\infty} \frac{(c^*\!-1)_{k-1}}{k!}\left(c^*\!-1-\frac{m+k(u+1)^2}{m+1-u^2}\right) \! (m+1-k)\,x^k.
\end{align}
Note that if $c^*=2$ then $u=\pm 1$, and we have just one Belyi map with $u=1$, which is a special case of (\ref{eq:mcm2}).

Like in the previous example, we apply (\ref{eq:ck1aa}) to the case $p=3r$ and $(q/r)_{m+1}=0$:
\begin{equation}
\hpg21{-m-1,\,b^*}{-m-3}{\lambda}=(1-\lambda)^{-b^*\!-2}\,\hpg21{-2,-b^*\!-m-3}{-m-3}{\lambda}.
\end{equation}
We obtain a quadratic equation analogous to (\ref{eq:quadq2}).  It has solutions $\lambda\in\QQ$ when 
\begin{equation} 
b^* =-\frac{(m+3)u^2}{m+2+u^2}
\end{equation}
for some $u\in\QQ$. Then the $\lambda$-values are $(m+2+u^2)/(m+2\pm u)$.
\end{example}

\begin{example} \rm \label{eq:pqrr113}
Now we consider the cases with larger $m$ that have three Belyi maps. 
Suppose that $q=-3r$. Then the hypergeometric polynomial (\ref{eq:ck1a}) for $h_{m+1}$ 
defines a cubic relation between $\lambda$ and $p/r$. 
This cubic relation defines a curve of genus 1, as stated in Proposition \ref{th:hypergell}.
The elliptic curve has the Mordell-Weil group isomorphic to  $(\ZZ/3\ZZ)\times \ZZ$ for general $m$.
The isomorphism (\ref{ce:isom}) has to be adjusted with 
\begin{equation} \label{eq:elladj}
b=-m-1,  \qquad c=3-m-\frac{p}{r}.
\end{equation}  
Analysis with the data base \cite{LMFDB} found the examples with $m\in\{6,11,13,17,23,25\}$ giving
the Mordell-Weil group $(\ZZ/3\ZZ)\times \ZZ^2\!$, though the cases $m\!\in\!\{14,20,21,22\}$ are not in the data base yet. 
In particular, the elliptic curve for the case $m=6$ has the label 39690.bj2 in \cite{LMFDB}. 
The equation specialized from (\ref{eq:gell}) is 
\begin{equation} \label{eq:ellc7}
y^2=x^3+49(3x+96)^2.
\end{equation}
Besides the specialized generators $(0,672)$ and $(-56,280)$ of Proposition \ref{th:hypergell},
its Mordell-Weil group has also the generator $(-48,48)$ over $\QQ$. 
This extra generator corresponds to the hypergeometric evaluation 
\begin{equation} \label{eq:kr13}
\hpg21{-7,-3}{13}{-1}=0,
\end{equation} 
and the Belyi map
\begin{equation} \label{eq:193}
\frac{1-16x+117x^2-512x^3+1463x^4-2736x^5+2907x^6}{(1-x)^{19}(1+x)^3}.
\end{equation}
It is interesting to observe that the expanded polynomial $(1-x)^{19}(1+x)^3$ does not have the terms 
with $x^7$ (as we just used) and with $x^{11}$, $x^{15}$. 
Therefore we can obtain two more Belyi maps by extending the numerator of (\ref{eq:193}) 
to polynomials of degree $m=10$ or $m=14$. This  phenomenon is typical for integer points
on our hypergeometric surfaces, as the next example suggests.
\end{example}

\begin{example} \label{ex:degd4} \rm
For $q=-4r$, the parametrization (\ref{eq:bcp4})--(\ref{eq:bcp4a}) has to be adjusted by (\ref{eq:elladj}).
We seek negative values of $b$ or $c,-b-c-m$ 
due to the hypergeometric symmetries (\ref{eq:hpgsym1}), (\ref{eq:hpgsym2})--(\ref{eq:hpgsym6}). 
Here are some cases for $(b,c)$ found:
\begin{align}
&  \textstyle (-5,-\frac{4}{7}),\, (-7,-\frac{7}{5}),\, (-9,-\frac{9}{2}),\, (-14,-\frac{26}{5}),\,  
(-14,-\frac{22}{19}),\,  (-19,-\frac{3}{20}), \quad \nonumber \\
&  \textstyle  (-19,-\frac{17}{31}),\,  (-22,-\frac{5}{31}),\, (-22,-\frac{57}{107}),\, (-24, -\frac{77}4),\,
(-24,-\frac{33}{10}),  \nonumber \\
&  \textstyle (-33,-\frac{33}{7}),\, (-36,-\frac{20}{3}),\, (-43,-\frac{43}{2}),\, (-43,-\frac{168}{127}),\, (-54,-\frac{39}{85}), \nonumber  \\
&  \textstyle (-54,-\frac{51}{233}),\, (-59,-\frac{19}{85}),\, (-59,-\frac{63}{139}),\, (-65,-\frac{52}{7}),\, (-68,-\frac{871}{19}), \nonumber \\
&  \textstyle   (-71,-\frac{180}{19}),\, (-78,-\frac{22}{3}),\, (-78,-\frac{190}7),\, (-79,-\frac{247}{5}),\,  
(-80,-\frac{231}{23}).
\end{align}
Of particular interest are the points with both $b,c$ negative integers.
Found instances are
\begin{align}
& (-7,-10),\, (-14,-66),\, (-28,-65),\, (-14,-52),\, (-30,-36),\, (-22,-35), \qquad \nonumber \\
& (-63,-64),\, (-41,-247),\, (-78,-210),\, (-115,-210),\, (-247,-780), \nonumber \\ 
& (-341,-715),\, (-901,-1856),\, (-1730,-8478),\, (-2795,-16512). 
\end{align}
They give two Belyi maps though for different cases of $m,p/r$.
For example, the first instance gives 
the hypergeometric evaluations 
\begin{equation}   \label{eq:kr74}
\hpg21{-7,-4}{7}{-1}=0, \qquad \hpg21{-10,-4}{4}{-1}=0.
\end{equation}
They both lead to consider the polynomial
\begin{align}
& (1-x)^4(1+x)^{13} = 1+9x+32x^2+48x^3-12x^4-156x^5-208x^6 \\
& \quad +286x^8+286x^9-208x^{11}-156x^{12}-12x^{13}+32x^{15}+9x^{16}+x^{17} \qquad \nonumber
\end{align}
as a power series without the terms $x^7$ or $x^{10}$. 
\end{example}

\begin{remark} \rm
The cases like (\ref{eq:kr13}), (\ref{eq:kr74}) of hypergeometric equations with integer parameters 
can be expressed in terms of {\em Krawtchouk polynomials} \cite[\S 9.11]{KS10}:
\begin{align}
K_n(x;p,N) = \hpg21{-n,-x}{-N}{\,\frac1p\,}.
\end{align}
Following (\ref{eq:hpgsym2}), hypergeometric polynomials in (\ref{eq:kr13}), (\ref{eq:kr74}) are identified by
\begin{align}
\hpg21{-m,-n}{M}{1-\frac1p} =
 \frac{(M+m)_n}{(M)_n} \, K_n(m;p,M+m+n-1).
\end{align}
Finding integer roots of Krawtchouk polynomials is an active field of research \cite{KraLit96}, \cite[\S 7.2]{StrodW99}
with special implications for graph and coding theories \cite{Habs01}, algebraic geometry \cite{CilSols01}, 
Pad\'e approximations \cite[\S 2.2]{ZhedPade}. 
\end{remark}

\section{Belyi maps of the form (\ref{eq:g2hm})}
\label{sc:main2}

The form (\ref{eq:g2hm}) of Belyi maps 
is the compacted case $q=p$ of the form (\ref{eq:g11hm}), 
where $(1-x)^p(1-\lambda x)^p$ is replaced by the aggregate power $(1+\alpha x+\beta x^2)^p$. 
This grouping of points with the same branching order is routinely used to simplify maximally
the {\em field of definition} of Belyi maps \cite{LandoZvonkin}.
Similarly, here we are not interested in the case $r=p$. 
These maps correspond to the cases $(1-x^{m+2})^p$ of Remark \ref{rm:pfp},
with a quadratic factor of $1-x^{m+2}$ distinguished.

Let us denote $H_2(x)=1+\alpha x+\beta x^2$. 
The condition (\ref{eq:bmpws}) translates into the power series relation
\begin{equation} \label{eq:g2hmz}
G_m(x) = H_2(x)^{-p/r}\quad \mbox{mod } x^{m+2}.
\end{equation}
The power series term with $x^{m+1}$ has to equal $0$.
The polynomial $G_m$ is determined uniquely by the power series 
\begin{equation} \label{eq:pw2}
H_2(x)^{-p/r}=  \sum_{k=0}^{\infty} g_kx^k
\end{equation}
truncated at the $(m+1)$-the term. The power series of 
can be computed by expanding $(-\alpha-\beta x)^j$ in 
\begin{align} \label{eq:q2pw}
H_2(x)^{-p/r}=&\, \sum_{j=0}^{\infty} \,\frac{1}{j!}\,\Big(\,\frac{p}{r}\,\Big)_{\!j}\,(-\alpha-\beta x)^jx^j.
\end{align}
Explicitly,
\begin{equation} \label{eq:gk}
g_k=\sum_{j=0}^{\lfloor k/2 \rfloor} \frac{(-1)^{k-j}(p/r)_{k-j}\,\alpha^{k-2j}\beta^j}{(k-2j)!\,j!}. 
\end{equation}
If $(p/r)_k\neq 0$ and $\alpha\neq 0$, we have the hypergeometric expression
\begin{equation} \label{eq:q2pwhpg}
g_k=\frac{(-\alpha)^k}{k!}\,\Big(\,\frac{p}{r}\,\Big)_{\!k}\;\hpg21{-\frac{k}2,-\frac{k-1}2}{1-k-\frac{p}{r}}{\frac{4\beta}{\alpha^2} }.
\end{equation}
Distinct Belyi maps correspond to the roots of $g_{m+1}(\alpha\!:\!\beta)$, 
identified up to the $(1\!:\!2)$-weighted homogeneous action of scaling $x$. 
There is a Belyi map with \mbox{$\alpha=0$} when $m$ is even.
It is obtained from Example \ref{ex:1m1} after the substitution $x\mapsto x^2$, $q=r$
(with $q,m$ over there equal to the current $r,m/2$).

The following degeneratated cases are encountered:
\begin{itemize}
\item There are no Belyi maps when $p=-\ell r$ for a positive integer \mbox{$\ell\le \lfloor m/2 \rfloor$,} 
because then $G_m(x)=H_2(x)^j$, and possibly of lesser degree than $m$.
\item If $p=-\ell r$ for a positive integer satisfying \mbox{$\lfloor m/2 \rfloor<\ell\le m$,} 
then the first $m+1-\ell$ terms for the sum (\ref{eq:gk}) for $g_{m+1}$ are zero,
and $g_{m+1}$ has the degenerate root $\beta=0$ of that multiplicity. 
After shifting the summation index by $k-\ell$,
\begin{equation} \label{eq:q2pwhp}
g_k=\frac{\alpha^{2\ell-k}\beta^{k-\ell}\,\ell!}{(k-\ell)!\,(2\ell-k)!}
\,\hpg21{\frac{k}2-\ell,\frac{k+1}2-\ell}{1+k-\ell}{\frac{4\beta}{\alpha^2}}.
\end{equation}
There are $\ell-\lceil m/2 \rceil$ proper Belyi maps then, including the $\alpha=0$ case for even $m$.
\item If $p=-\ell r/2$ for a positive odd integer $\ell\le m$, then 
\begin{equation}  \label{eq:pqj2}  
g_{m+1}=\frac{(-\alpha)^{m+1}\big(-\frac{\ell}{2}\big)_{\!m+1}}{(m+1)!} 
\left(1-\frac{4\beta}{\alpha^2}\right)^{\!\frac{\ell+1}{2}}
\hpg21{\frac{\ell-m}2,\frac{\ell+1-m}2}{\frac{\ell}2-m}{\frac{4\beta}{\alpha^2}}.
\end{equation}
by Euler's formula (\ref{eq:eulr}).  
The root $\beta=\alpha^2/4$ corresponds to the degeneration of $H_2(x)$ to a full square.
The transformed hypergeometric sum can be identified as $g_{m-\ell}$ with the substituted $p/r=1+\ell/2$. 
There are then $\lceil (m-\ell)/2 \rceil$ Belyi maps, including the $\alpha=0$ case for even $m$. 
In particular, there are no Belyi maps when the odd $\ell=m$,
and there is only the map with $\alpha=0$ when the odd $\ell=m-1$.
\end{itemize}
Otherwise, that is when $p\neq -\ell r$ for all positive $\ell\le m$
and $p\neq -\ell r/2$ for all positive odd $\ell\le m$, the $\hpgo21$-factor 
in (\ref{eq:q2pwhpg}) for $g_{m+1}$ gives \mbox{$\lceil m/2 \rceil$} distinct values with \mbox{$\beta\not\in\{0,\alpha^2/4\}$}
by Lemma \ref{th:ck2}.  Together with $\alpha=0$, in total we then have \mbox{$\lceil (m+1)/2 \rceil$} Belyi maps. 

\subsection{Full sets of Belyi maps}

Let us enumerate the following examples by the value of $m$. We are principally interested in Belyi maps defined over $\QQ$. 
\setcounter{theorem}{1}

\begin{example} \rm
For $m=2$, there is a Belyi map defined with $\alpha=0$. When $p\neq -r/2$, there is other map.
We can scale $x$ to obtain
\begin{align}
H_2(x)= &\; 1+rx+\frc16r(p+2r)\,x^2, \\
G_2(x)= &\; 1-px+\frc16p(2p+r)\,x^2. \nonumber
\end{align}
\end{example}

\begin{example} \rm
For $m=3$, the Belyi maps are defined with the radical \mbox{$R_3=\pm\sqrt{3(p+2r)(2p+3r)}$}:
\begin{align}
H_2(x)= &\; 1+rx+\frc16r\big(3p+6r+R_3\big)\,x^2, \\
G_3(x)= &\; 1-px+\frc16p\big(3r+R_3\big)\,x^2+\frc16p(p+r)\big(2p+4r+R_3\big)\,x^3. \nonumber
\end{align}
There are no Belyi maps for $p/r\in\{-1,-2,-\frac32\}$. 
For $p=-r/2$ or $p=-3r$ we have single Belyi maps, respectively:
\begin{align}
\frac{(1+2x+5x^2)^p}{(1+x+2x^2-2x^3)^{2p}},\qquad \frac{(1+3x-5x^3)^{r}}{(1+x-x^2)^{3r}}.
\end{align}
The cases when $R_3\in\QQ$ are parametrized as
\begin{align}
\frac{p}{r}=\frac{9-2s^2}{s^2-6}, \qquad s\in\QQ.
\end{align}
\end{example} 
\begin{example} \rm
For $m=4$, the Belyi maps with $\alpha\neq 0$ are defined with the radical \mbox{$R_4=\pm\sqrt{5(p+3r)(2p+3r)}$}:
\begin{align}
H_2(x)= &\; 1+rx+\frc1{30} r\big(5p+15r+R_4\big)\,x^2, \\
G_4(x)= &\; 1-px+\frc1{30}p\big(10p-R_4\big)\,x^2+\frc1{30}p(p+r)\big(5r+R_4\big)\,x^3 \quad \nonumber \\
& \, -\frc1{180}p(p+r)(2p+3r)\big(2p+6r+R_4\big)\,x^4. \nonumber
\end{align}
There are no Belyi maps for $p/r\in\{-1,-2,-3,-\frac32\}$. 
For $p=-r/2$ or $p=-4r$ we have single Belyi maps, respectively:
\begin{align}
\frac{(1+6x+21x^2)^p}{(1+3x+6x^2-18x^3+36x^4)^{2p}}, \quad \frac{(1+12x+42x^2-189x^4)^{r}}{(1+3x-3x^2)^{4q}}.
\end{align}
The cases when $R_4\in\QQ$ are parametrized as
\begin{align}
\frac{p}{r}=\frac{15-3s^2}{2s^2-5}, \qquad s\in\QQ.
\end{align}
\end{example}

\begin{example} \rm \label{ex:ec5}
The case $m=5$ is considered in \cite[\S 2.2.4.3]{LandoZvonkin}.
The cubic hypergeometric relation 
\begin{align} \label{eq:hec5}
\hpg21{-3,-\frac{5}2}{-\frac{p}{r}-5\,}{\frac{4\beta}{\alpha^2\!}}=0
\end{align}
can be analyzed within the context of Section \ref{sec:h3efb} with $b=-5/2$, $c=3+p/r$, and $z=4\beta/\alpha^2$.
We combine Theorem \ref{th:hypergell} 
with a simple transformation 
\begin{equation}. \label{eq:b52uvs}
(u,v)\mapsto \left( \frac{u-75}{4}, \, \frac{v}{8} \right).
\end{equation}
The obtained  elliptic curve is the same as in \cite{LandoZvonkin}:
\begin{align}  \label{eq:ecm5}
\EE_5:\quad v^2=u^3-2475u-5850.
\end{align}
The isomorphism obtained from (\ref{ce:isom}) is simpler than in \cite[pg.~105]{LandoZvonkin}:
\begin{align}  \label{eq:ecp5}
\frac{p}{q}=\frac{645-15u-11v}{4v},\qquad  z=\frac{45-7u+v}{2v}.
\end{align}
The data base of elliptic curves \cite{LMFDB} identifies this curve by the label 4050.y2,
and confirms 
that the Mordell-Weil group of $E_1$ is isomorphic to $\ZZ\times(\ZZ/3\ZZ)$.
Any $\QQ$-rational point on $E_1$ can be expressed using the addition law  on the elliptic curve  as
\begin{equation} \label{eq:mw1}
n\,(-5,80)+\varepsilon\,(75,480),\qquad \mbox{with } n\in\ZZ, \; \varepsilon\in\{0,1,-1\}.
\end{equation}
Several of the rational points correspond to the degenerate cases $p/r\in\left\{0,-3,-5/2 \right\}$
of no Belyi maps, $p/r\in\left\{-4,-3/2\right\}$ of single maps, and $p/r\in\left\{-5,-1/2\right\}$ of coupled maps.
The elliptic involution represents the hypergeometric identity (\ref{eq:hpgsym2}), and acts as
\[
\left( \frac{p}{r},\,z \right) \mapsto \left( -\frac{p}{r}-\frac{11}2, \, 1-z \right).
\]
There are rational points with $p/r\in\{-11/2,-11/4\}$. 
Here are some other values of $p/r$ that give Belyi maps defined over $\QQ$:
\begin{align} 
-\frc{28}{11},-\frc{65}{22}; -\frc{59}{23},-\frc{135}{46}; -\frc{65}{107},-\frc{1047}{214};'
\frc{33}{124},-\frc{715}{124}; -\frc{251}{169},-\frc{1357}{338}. 
\end{align}
This list has only one positive value, representing a Shabat polynomial defined over $\QQ$. 
Rarity of positive values for $p/r$ is observed in \cite{LandoZvonkin}. Here are the next few:
\[  
\frc{2008145}{1653242},\frc{1317156026567}{649800344821},
 \frc{487834953714776556005}{104793834699948131134},
 \frc{1999297558019926898176821908516}{208841813498019535906845150263}.
\]
Following (\ref{eq:ecp5}), the positivity region in Figure \ref{fig:positivepq}\refpart{i} is cut out by the lines $v=0$ 
and $15u+11v=645$. We have 3 separate small regions on $\EE_5$ between these lines, 
the one with $u>0$ especially tiny. The rational points are distributed ergodically on $\EE_5$,
with the density proportional to invariant holomorphic differential $du/v$. 
The whole measure is the real period over the finite oval or the infinite piece.
It can be computed avoiding numeric issues near $v=0$ or $u=\infty$ 
by integrating $du/v$ between points that differ by a 3-torsion point,
and then multiplying by 3. For example,
\begin{align}
\varrho_5 = 
 3 \int_{-45}^{-5} \frac{du}{v} = 3 \int_{51}^{315} \frac{du}{v}  \approx 0.732116211.
\end{align}
The integrals over the 3 small regions could be stably computed by shifting the intgeration range by a 3-torsion point.
The two integrals on the finite oval evaluate to $\approx 0.0564864103+0.0524120276$. 
After division by $\varrho_5$ we can compute the percentage and conclude that positive values of $p/q$
asymptotically occur about $5.72$ less frequently than the negative ones on the finite oval. 
The points (\ref{eq:mw1}) with $|n|<200$ give 46 and 43 positive values in the two regions.
This is within the rounding error from the asymptotic prediction.
The tiny integral on the infinite branch is $\approx 0.00407438266$.
This gives the odds ratio $\approx179$ for positive values. 
The points (\ref{eq:mw1}) with $|n|<200$ give 3 positive values of $p/q$, 
with $n\in\{54,162,-188\}$. Their denominators have 736, 6640 or 8944 decimal digits, respectively.
The same search looked for complete factorizations of the cubic polynomial in $z$, but only predictable degenerations were found.
By (\ref{eq:z23}) and (\ref{eq:b52uvs}), we need $\sqrt{102-2u}\in \QQ$ then.
\end{example}

\begin{figure}
\begin{center}
\begin{picture}(400,165)
\put(-3,2){\includegraphics[width=164pt]{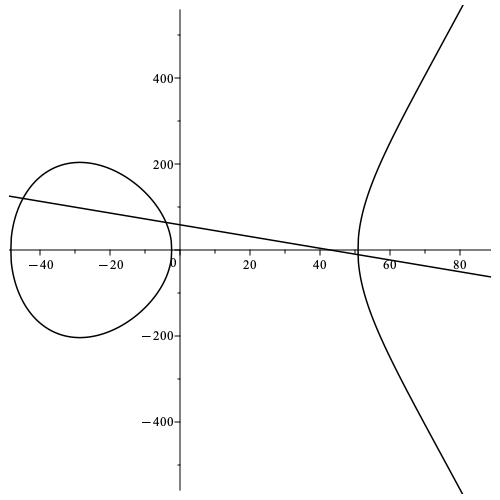}}
\put(236,2){\includegraphics[width=164pt]{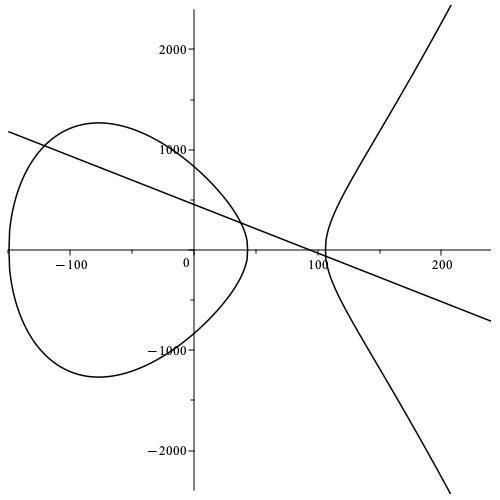}}
\put(0,-5){\refpart{i}} \put(237,-5){\refpart{ii}} 
\end{picture}
\end{center}
\caption{The elliptic curves (\ref{eq:ecm5}) and (\ref{eq:ecm6}), 
with the numerator line in (\ref{eq:ecp5}) or (\ref{eq:ecp6}) 
that together with the horizontal line $v=0$ determines the sign of $p/r$.}
\label{fig:positivepq}
\end{figure}

\begin{example} \rm \label{ex:ec6}
Similarly, for  $m=6$ we can investigate the cubic relation 
\begin{align} \label{eq:hec6}
\hpg21{-3,-\frac{7}2}{-\frac{p}{r}-6\,}{\,z}=0
\end{align}
between $p/r$ and $z=4\beta/\alpha^2$
by applying Theorem \ref{th:hypergell} with $b=-7/2$, $c=4+p/r$.
Additionally, we apply the simple transformation
\begin{equation}. \label{eq:b52uvs6}
(u,v)\mapsto \left( \frac{u-147}{4}, \, \frac{v}{8} \right)
\end{equation}
and derive the elliptic curve
\begin{align} \label{eq:ecm6}
\EE_6: \quad v^2=u^3-17787u+692566,
\end{align}
with an isomorphism
\begin{align}  \label{eq:ecp6}
\frac{p}{r}=\frac{5901-63u-13v}{4v},\qquad  z= \frac{791-13u+v}{8v}.
\end{align}
The data base of elliptic curves \cite[13230.dp1]{LMFDB}
tells that the Mordell-Weil group is $\ZZ\times(\ZZ/3\ZZ)$ here as well. 
The generators of $\QQ$-rational points are $(u,v)=(35,336)$ and a torsion point $(147,1120)$.
The elliptic involution acts as
\[
\left( \frac{p}{r},\,z \right) \mapsto \left( -\frac{p}{r}-\frac{13}2,\, 1-z \right).
\]
Here are some values of $p/r$ that give Belyi maps defined over $\QQ$:
\begin{align} 
-\frc{13}{4}\!,\,1,-\frc{15}{2},-\frc{13}{5},-\frc{39}{10},-\frc{30}{11},-\frc{83}{22},-\frc{52}{37},-\frc{377}{74},
-\frc{20}{29},-\frc{337}{58},
\frc{41}{16},-\frc{145}{16}. 
\end{align}
Positive values appear more frequently than in the $m=2$ case.
Besides the discardable $p/r=1$ representing a quadratic factor of $1-16x^8$ in $\QQ[x]$, 
we see $\frc{41}{16}$ as well. Here are the next few positive values: 
\[
\frc{83031}{5198},\frc{572982199}{35574034}, \frc{66779978696204}{16641846371989},
\frc{77057440650930450189}{148795415472621031586},
\frc{559984659717697802475460}{36618976557904027122191}.
\]
Following (\ref{eq:ecp6}), the positivity region in Figure \ref{fig:positivepq}\refpart{i} is cut out by the lines $v=0$ 
and $63u+13v=5901$. Again, we have 3 separate small regions on $\EE_6$ between these lines. 
The whole real period can be computed to be 
\begin{align}
 \varrho_6 = 
 3 \int_{-133}^{\,35} \frac{du}{v} = 3 \int_{107}^{707} \frac{du}{v}  \approx 0.541858251.
\end{align}
The two integrals on the finite oval evaluate to $\approx 0.0507070923+0.0430448636$. 
After division by $\varrho_6$ we can compute the odds ratio for positive $p/r$ to be $\approx 4.78$
for a rational point with $u<50$.  The small integral on the infinite branch is $\approx 0.00766222865$,
giving the odds ratio $\approx 69.7$. For a complete factorization of the cubic  polynomial 
we need $\sqrt{30(107-u)}\in\QQ$, but only a few predictable degenerations were found.
\end{example}

\begin{example} \rm \label{ex:ec7}
The case $m=7$ gives a genus 3 polynomial relation 
\begin{align} \label{eq:hec7}
\hpg21{-4,-\frac{7}2}{-\frac{p}{r}-7\,}{\,z}=0
\end{align}
between $p/r$ and $z=4\beta/\alpha^2$. 
The Faltings theorem \cite{Bombieri90}
implies that the genus 3 curve has only finitely many $\QQ$-rational points.
Section \ref{sec:hpg4fb} implies that it could be projected to an elliptic curve (\ref{eq:ec4sy})
with $b=-7/2$, which is
\begin{align}  \label{eq:ecm7}
\EE_7:\quad v^2=u\,\big(u^2-119u+\frc{14175}4\big).
\end{align}
The data base of elliptic curves \cite[94080.el2]{LMFDB} tells 
that the Mordell-Weil group of this curve has rank 2, and is isomorphic to $\ZZ^2\times(\ZZ/2\ZZ)$.
Beside the 2-torsion point $(u,v)=(0,0)$ and the generator $(\frc{105}2,-\frc{105}2)$ specialized from (\ref{eq:h4eg}),
another free generator is $(60,15)$.  To obtain rational points on the genus 3 curve,
we need rational points $(u,v)$ on $\EE_7$ giving a full square in (\ref{eq:h4tbs}).
That means $(v-14u)^2-11340u\,$ must be a full square.
An extensive search through $2\cdot61\cdot51>6000$ points on $\EE_7$ gives the following 12 values of 
$p/r=c-4$, paired by (\ref{eq:h4fbc}):
\begin{align} 
1,-\frc{17}2; 10,-\frc{35}{2}; -\frc{7}{3},-\frc{31}{6}; -\frc{14}{5},-\frc{47}{10}; -\frc{79}{11},-\frc{7}{22}; -\frc{34}{11},-\frc{97}{22}.
\end{align}
The value $1$ should be discarded along with the degenerate $p/r\in\{-5,-6,-7$, $-\frac12,-\frac32,-\frac52\}$.
The positive value 10 gives the Shabat polynomial {\small
\begin{equation}
(1+2x+4x^2)^{10\,} (1-20x+180x^2-880x^3+1760x^4+6336x^5-59840x^6+183040x^7). \,
\end{equation} }
\end{example}

\begin{example} \rm \label{ex:ec8}
Similarly, the case $m=8$ gives a genus 3 polynomial relation 
\begin{align} \label{eq:hec8}
\hpg21{-4,-\frac{9}2}{-\frac{p}{r}-8\,}{\,z}=0
\end{align}
between $p/r$ and $z=4\beta/\alpha^2$. 
Section \ref{sec:hpg4fb} implies that this curve could be projected to an elliptic curve (\ref{eq:ec4sy})
with $b=-9/2$, which is
\begin{align}  \label{eq:ecm8}
\EE_8:\quad v^2=u\,\big(u^2-243u+\frc{59535}4\big).
\end{align}
The data base of elliptic curves \cite[40320.bf2]{LMFDB} tells 
that the Mordell-Weil group is isomorphic to $\ZZ^2\times(\ZZ/2\ZZ)$.
Beside the generator $(\frc{189}2,-\frc{567}2)$ specialized from (\ref{eq:h4eg}),
another free generator is $(\frac{945}4,\frac{14175}8)$.  
We need rational points $(u,v)$ on $\EE_8$ giving a full square in (\ref{eq:h4tbs}).
That means $(v-18u)^2-34020u\,$ must be a full square.
An extensive search through $2\cdot61\cdot41>5000$ points on $\EE_8$ gives these 13 pertinent 
values of  $p/r=c-5$ from (\ref{eq:h4fbc}):
\begin{align} 
\hspace{-3pt} -\frc{17}2; -14,\frc{11}{2}; -\frc{8}{3},-\frc{35}{6}; 
-\frc{16}{5},-\frc{53}{10}; -\frc{17}{5},-\frc{51}{10}; -\frc{17}{7},-\frc{85}{14}; -\frc{371}{151},-\frc{1825}{302}.
\end{align}
The positive value $p/r=\frac{11}2$ gives the Shabat polynomial {\small
\begin{equation}
(1+2x+5x^2)^{11}\,(1-11x+44x^2-715x^4+2717x^5-572x^6-29172x^7+97240x^8)^2. 
\end{equation} }
\end{example}

\subsection{Cases with fewer Belyi maps}

\begin{example} \rm 
Let us examine the cases when the hypergeometric polynomials in (\ref{eq:q2pwhp}) or (\ref{eq:pqj2}) 
are linear in $z=4\beta/\alpha^2$.
For odd $m>2$ we obtain a single Belyi map 
when, respectively,
\begin{equation}
\ell=-\frac{p}{r}\in\left\{\frac{m+3}2,m-2\right\}.
\end{equation}
Indeed, setting $\ell=(m+3)/2$ in (\ref{eq:q2pwhp}) gives $z=-m-1$.
Then $H_2(x)=1+2x-(m+1)x^2$ up to $x$-scaling.
According to (\ref{eq:g2hmz}), the terms to $x^{2k}$ in the power series
\begin{align}
& \left(1+2x-(m+1)x^2\right)^{\!\frac{m+3}2} = \textstyle 1+(m+3)x-\frac{(m+1)(m+2)(m+3)}3\,x^3 \nonumber \\
& \qquad\textstyle -\frac{(m+1)(m+2)(m+3)}{12}(m-3)x^4+\frac{(m+1)(m+2)(m+3)(m-1)(m+5)}{20}\,x^5 \quad \nonumber  \\
& \qquad\textstyle +\frac{(m+1)(m+2)(m+3)^2(m-1)}{45}(m-5)x^6  \\ 
& \qquad\textstyle -\frac{(m+1)(m+2)(m+3)(m-1)(m-3)(m^2+19m+42)}{252}\,x^7 \nonumber \\ 
& \qquad\textstyle -\frac{(m+1)(m+2)(m+3)(m-1)(m-3)(m+5)(11m+26)}{3360}(m-7)\,x^8 +\ldots. \nonumber
\end{align}
must be divisible by $(m-2k+1)$. 
Setting $\ell=m-2$ in (\ref{eq:pqj2}) gives $z=m+2$. 
Here as well, the terms to $x^{2k}$ in the power series
\begin{align}
& \left(1+2x+(m+2)x\right)^{\frac{m-2}2} = \textstyle 1+(m-2)x+(m-2)(m-1)x^2+\frac{2m(m-2)(m-4)}3x^3 \nonumber \\
& \qquad\textstyle +\frac{(m-2)(m-4)(5m+2)}{12}(m-3)\,x^4+\frac{(m-2)(m-4)(m-6)(13m^2-9m-10)}{60}\,x^5 \nonumber  \\
& \qquad\textstyle +\frac{(m-2)(m-4)(m-6)(19m^2+5m-6)}{180}(m-5)\,x^6+\ldots. 
\end{align}
must be hypnotically divisible by $(m-2k+1)$. For even $m>2$ we obtain isolated Belyi maps 
when 
\begin{equation}
\ell=-\frac{p}{r}\in\left\{\frac{m+4}2,m-3\right\}.
\end{equation}
We obtain then $z=-m/3$ or $z=m/3+1$ from (\ref{eq:q2pwhp}) or (\ref{eq:pqj2}), respectively.
The terms to $x^{2k+1}$ in the following power series must be divisible by $(m-2k)$:
\begin{align}
& \left(1+2x-\!\frc{m}{3}x^2\right)^{\!\frac{m+4}2} = \textstyle 1+(m+4)x+\frac{(m+3)(m+4)}3\,x^2 
 -\frac{m(m+2)(m+3)(m+4)}{36}\,x^4 \nonumber \\
& \qquad\textstyle -\frac{m(m+2)(m+3)(m+4)}{180}(m-4)\,x^5 
 +\frac{m(m+2)(m+3)(m+4)(m^2+15m-36)}{1620}\,x^6  \nonumber  \\ 
& \qquad\textstyle +\frac{m(m-2)(m+2)(m+3)(m+4)(m+6)}{2835}(m-6)\,x^7+\ldots,\\
& \left(1+2x+\!\frc{m+3}{3}x^2\right)^{\!\frac{m-3}2} = \textstyle 1+(m-3)x+\frac{2(m-3)^2}3 x^2
+\frac{(m-3)(m-5)}{3}(m-2) x^3 \nonumber \\
& \qquad\textstyle  +\frac{(m-3)(m-5)(5m^2-33m+36)}{36}\,x^4 
 +\frac{(m-3)(m-5)(m-7)(m-1)}{20}(m-4)\,x^5   \nonumber  \\ 
& \qquad\textstyle  +\frac{(m-3)^2(m-5)(m-7)(13m^2-105m+72)}{810}\,x^6  \nonumber \\
& \qquad\textstyle +\frac{(m-3)(m-5)(m-7)(m-9)(53m^2-153m+72)}{11340}(m-6)\,x^7+\ldots.
\end{align}
\end{example}

\begin{example} \rm \label{ex:pell}
Let us look at the cases when the hypergeometric polynomials in (\ref{eq:q2pwhp}) or (\ref{eq:pqj2}) 
are quadratic in $z=4\beta/\alpha^2$.
For odd $m$ we then have 
\begin{equation}
\ell=-\frac{p}{r}\in\left\{\frac{m+5}2,m-4\right\}.
\end{equation}
Setting $\ell=(m+5)/2$ in (\ref{eq:q2pwhp}) gives this equation for $z$:
\begin{equation} \label{eq:beta2a}
3z^2+6(m+1)z+m^2-1=0.
\end{equation}
It has roots in $\QQ$ when the discriminant $24(m+1)(m+2)$ is a square. 
An equivalent condition is existence of integer solutions of the Pell equation
\begin{equation} \label{eq:pell1}
(2m+3)^2-6d^2=1,
\end{equation}
as observed in \cite[Ch.~10]{APZ}.
These solutions correspond to the units in the field $\QQ(\sqrt{6})$, which are generated by $5+2\sqrt{6}$.  
We should express them as 
\begin{equation}  \label{eq:pell1a}
\big(5+2\sqrt{6}\big)^n = (2m+3)\pm\sqrt{6}\,(z+m+1),
\end{equation}
leading to two integer values of $z$ for suitable $m$.
For $n\in\{2,3,4\}$ we get the suitable values $m\in\{23,241,2399\}$.
The case $m=23$ gives two Belyi maps with $p/r=-14$ defined $\QQ$,
with $H_2(x)=1+x-x^2$ or $H_2(x)=1+x-11x^2$.

Setting $\ell=m-4$ in  (\ref{eq:pqj2}) gives this equation for $z$:
\begin{equation}
3z^2-6(m+2)z+(m+2)(m+4)=0. 
\end{equation}
It has the same discriminant as (\ref{eq:beta2a}), leading to the same Pell equation (\ref{eq:pell1}).
The case $m=23$ gives two Belyi maps with $p/r=-19/2$ defined $\QQ$,
with $H_2=1+2x+5x^2$ or $H_2=1+2x+45x^2$.

For even $m$ we have 
\begin{equation}
\ell=-\frac{p}{r}\in\left\{\frac{m+6}2,\frac{m-5}2\right\}.
\end{equation}
Setting $\ell=(m+6)/2$ in (\ref{eq:q2pwhp}) gives this equation for $z$:
\begin{equation} \label{eq:beta2b}
15z^2-10mz+m(m-2)=0. 
\end{equation}
It has roots in $\QQ$ when the discriminant $40m(m+3)$ is a square. 
An equivalent condition is existence of integer solutions of the equation
\begin{equation} \label{eq:pell2b}
(2m+3)^2-10d^2=9.
\end{equation}
If $m$ is divisible by 3 (and thus by 6), we have a reduction to Pell's equation
\begin{equation}
\big( {\textstyle \frac23m+1 } \big )^2 - 10 \hat{d}^{\,2}=1.
\end{equation}
The solutions correspond to units in the field $\QQ(\sqrt{10})$, which are generated by $3+\sqrt{10}$.  
We should express the solutions as 
\begin{equation}  \label{eq:pell2a}
\big(3+\sqrt{10}\big)^n = \textstyle \big(\frac23m+1\big)\pm\sqrt{10}\,\big(z+\frac13m\big),
\end{equation}
leading to two integer values of $z$ for suitable $m$.
But the norm of $3+\sqrt{10}$ in $\QQ(\sqrt{10})$ is $-1$ rather than $1$; thus $n$ should be even.
Further, $m$ is prescribed to be even (while $n=2$ gives $m=27$, for example).
For that we need $n$ to be divisible by 4. The smallest possibility $n=4$ gives $m=1080$.

If $m$ is not divisible by $3$, equation (\ref{eq:pell2b}) is solved by considering the numbers of the norm 3
in $\QQ(\sqrt{10})$:
\begin{align}  \label{eq:pell2c}
(1+\sqrt{10})\big(3+\sqrt{10}\big)^n, \qquad
(1-\sqrt{10})\big(3+\sqrt{10}\big)^n. 
\end{align}
The two options correspond to the fact that $\QQ(\sqrt{10})$ is not a unique factorization domain;
its class number equals 2. In the first case, we need $n=3$ mod 4 for even $m$;  the smallest $n=3$ gives $m=242$. 
In the second case we need $n=1$ mod 4;  the smallest interesting $n=5$ gives $m=4802$. 

Setting $\ell=m-5$ in  (\ref{eq:pqj2}) gives this equation for $z$:
\begin{equation}
15z^2-10(m+3)z+(m+3)(m+5)=0.
\end{equation}
It has the same discriminant as (\ref{eq:beta2b}), leading to the same equation (\ref{eq:pell2b}).
\end{example}

\begin{example} \rm
Let us consider hypergeometric polynomials (\ref{eq:q2pwhp}) of degree 3 or 4.
Note that the lower parameter $1+k-\ell$ is supposed to be a positive integer for $k=m+1$.
With odd $m$ and $\ell=(m+7)/2$, the polynomial in $z=4\beta/\alpha^2$ is
\begin{equation}
\hpg21{-3,\,-\frac{5}2\,}{\ell-5}{\,z}=0.  
\end{equation}
Comparing with (\ref{eq:hec5}), we conclude that we need integer values $p/r=-\ell$
of Example \ref{ex:ec5} satisfying $p/r<-5$.  That excludes the discarded $p/r\in\{0,-1,-2,-3$, $-4,-5\}$ again.
The number of points on $\EE_5$ that give integer values of $p/r$ (or any other regular function)
is finite by Siegel's theorem \cite[IX.3]{Silverman}.
In this case, there appear to be no relevant integer values.

Similarly, consideration of integer $\ell\in \{\frac{m+8}2,\frac{m+9}2,\frac{m+10}2\}$ in (\ref{eq:q2pwhp}) 
leads  to the hypergeometric polynomials
 \begin{equation}
 \hpg21{-3,\,-\frac{7}2\,}{\ell-6}{z}\!, \qquad
\hpg21{-4,\,-\frac{7}2\,}{\ell-7}{z}\!, \qquad
\hpg21{-4,\,-\frac{9}2\,}{\ell-8}{z}\!, 
\end{equation}
comparable to (\ref{eq:hec6}), (\ref{eq:hec7}), (\ref{eq:hec8}), respectively.
We need sufficiently negative integer values of $p/r=-\ell$ from Examples \ref{ex:ec6}\,--\,\ref{ex:ec8}.
Only the value \mbox{$p/r=-14$} of Example \ref{ex:ec8} suits us. 
The pivotal hypergeometric evaluation is 
\begin{equation}
\hpg21{-4,\,-\frac{9}2\,}{6}{-4}=0.
\end{equation}
This gives $\ell=14$, $m=18$ and $H_2(x)=1+x-x^2$. 
The expansion of $H_2(x)^{14}$ misses the term with $x^{19}$ accordingly.
The polynomial $G_{18}(x)$ is defined by the lower degree terms: {\small
\begin{align}
& 1+14x+77x^2+182x^3-910x^5-1365x^6+1430x^7+5005x^8-10010x^{10}-3640x^{11} \nonumber \\
& +14105x^{12}+6930x^{13}-15625x^{14}-6930x^{15}+14105x^{16}+3640x^{17}-10010x^{18}.
\end{align}}The terms with $x^4,x^9$ are missed as well.
The broken symmetry around the term with $x^{14}$ is notable.
The Belyi map is $H_2(x)^{14}/G_{18}(x)$. 
\end{example}

\begin{example} \rm
Hypergeometric polynomials (\ref{eq:pqj2}) of degree 3 or 4 are considered similarly.
Consideration of odd $\ell\in\{m-6,m-7,m-8,m-9\}$ in (\ref{eq:pqj2})
leads  to the hypergeometric polynomials
\begin{equation*}
\hpg21{\!-3,-\frac{5}2}{\!-\frac{\ell}{2}-6}{z}\!, \qquad
\hpg21{\!-3,-\frac{7}2}{\!-\frac{\ell}{2}-7}{z}\!, \qquad
\hpg21{\!-4,-\frac{7}2}{\!-\frac{\ell}{2}-8}{z}\!, \qquad
\hpg21{\!-4,-\frac{9}2}{\!-\frac{\ell}{2}-9}{z}\!,
\end{equation*}
comparable to (\ref{eq:hec5}), (\ref{eq:hec6}), (\ref{eq:hec7}), (\ref{eq:hec8}), respectively.
The relation with $p/r$ of Examples \ref{ex:ec5}\,--\,\ref{ex:ec8} is always $p/r=1+\ell/2$.
We look for positive half-integer values $p/r$. 
Again, only Example \ref{ex:ec8} provides: $p/r=11/2$.
This gives $\ell=9$, $m=18$, and also $H_2(x)=1+2x+5x^2$. 
The expansion of $H_2(x)^{9/2}$ misses the term with $x^{19}$ indeed. We have
{\small
\begin{align}
G_{18}(x)= &\, 1+9x+54x^2+210x^3+630x^4+1386x^5+2394x^6+2394x^7+2655x^8 \\
& +1195x^9+252x^{10}-252x^{11}+168x^{12}-180x^{14}+228x^{15}-18x^{16}-378x^{17}+560x^{18}. \nonumber
\end{align}}For some reason, the coefficients $2394$ and $\pm252$ repeat twice consequently. 
The Belyi map is $G_{18}(x)^2/H_2(x)^{9}$. 
\end{example}

\small 
\bibliographystyle{alpha}

\end{document}